\newenvironment{itenv*}
  {\phantomsection\par\medskip\noindent\itshape}
  {\par\medskip}
\newtheorem{lemma}{Lemma}[section]
\newtheorem{prop}{Proposition}[section]
\newtheorem{thm}{Theorem}[section]
\newtheorem{cor}{Corollary}[section]
\theoremstyle{definition}
\newtheorem{defn}{Definition}[section]
\newtheorem{ex}{Example}[section]
\newtheorem{notation}{Notation}[section]
\newtheorem{conjec}{Conjecture}[section]
\newtheorem{remk}{Remark}[section]
\numberwithin{equation}{section}
\DeclareMathOperator{\SL}{SL} 
\DeclareMathOperator{\Z}{\mathbb{Z}} 
\DeclareMathOperator{\id}{id} 
\DeclareMathOperator{\im}{im} 
\DeclareMathOperator{\sheafhom}{\mathscr{H}\text{\kern -3pt {\calligra\large om}}\,} 
\DeclareMathOperator{\sheafext}{\mathscr{E}\text{\kern -3pt {\calligra\large et}}\,} 
\DeclareMathOperator{\sheafend}{\mathscr{E}\text{\kern -3pt {\calligra\large nd}}\,\,\,} 
\DeclareMathOperator{\can}{can} 
\DeclareMathOperator{\unif}{unif} 
\DeclareMathOperator{\HIG}{HIG} 
\DeclareMathOperator{\MIC}{MIC} 
\DeclareMathAlphabet{\mathcal}{OMS}{cmsy}{m}{n}
\begin{document}
\title{Strong semistability of Higgs bundles over curves}
\author{Bowen Liu}
\address{Yau Mathematical Sciences Center, Tsinghua University, 100084 Beijing, China}
\email{liubw22@mails.tsinghua.edu.cn}

\author{Mao Sheng}
\address{Yau Mathematical Sciences Center, Tsinghua University, 100084 Beijing, China}
\address{Yanqi Lake Beijing Institute of Mathematical Sciences and Applications, 101408, Beijing, China}
\email{msheng@tsinghua.edu.cn}
\begin{abstract}
In this paper we complete the study of the Lan-Sheng-Zuo conjecture proposed in \cite{lan2013semistablehiggsbundlesrepresentations} for the curve case. Precisely, we prove that every nilpotent semistable Higgs bundle of exponent $\leq p-1$ is strongly semistable for curves of genus $g\leq 1$, and over any curves of genus $g\ge2$ construct explicit examples of nilpotent semistable Higgs bundles of exponent $\leq p-1$ and arbitrary big rank (the first example is $p=2$ and rank $3$) which are not strongly semistable. These results are complementary to the strong semistability theorem of Lan-Sheng-Yang-Zuo \cite{MR3994100} and Langer \cite{MR3218782} for semistable Higgs bundles of small rank. 
\end{abstract}

\maketitle
\tableofcontents

\setlength{\parskip}{3pt}

\section{Introduction}
In this paper we work over an algebraically closed field $k$ with $\mathrm{char}(k)=p>0$, and curves mean nonsingular projective curves over $k$.

Let $C$ be a curve and $\mathcal{V}$ be a vector bundle over $C$. Classically, $\mathcal{V}$ is called strongly semistable if for all $n\geq 0$, $(F^n)^*\mathcal{V}$ is semistable, where $F$ is the absolute Frobenius of $C$. It is natural to ask if every semistable vector bundle is strongly semistable. An answer in either direction is interesting to know, since $F$ is inseparable (pullback along a finite seperable morphism preserves semistability). D. Gieseker \cite{MR325616} constructed a rank two semistable vector bundle over curve which is \emph{not} strongly semistable.

Let $X$ be a smooth projective variety defined over $k$ which is $W_2(k)$-liftable. Based on the advance \cite{MR2373230} in non-abelian Hodge theory in positive characteristic, Lan-Sheng-Zuo \cite{lan2013semistablehiggsbundlesrepresentations} introduced the notion of \emph{strong semistability} of Higgs bundles on $X$, and proved that every rank two nilpotent semistable Higgs bundle is strongly semistable (\cite[Theorem 2.6]{lan2013semistablehiggsbundlesrepresentations}). In particular, the example of Gieseker is actually strongly semistable in this new sense, regarded as a Higgs bundle with zero Higgs field. Then it becomes another natural question that whether or not every nilpotent \footnote{The nilpotency condition is a necessary condition for a Higgs bundle to initialize a Higgs-de Rham flow.} semistable Higgs bundle of exponent $\leq p-1$ is strongly semistable. With an optimistic attitude, Lan-Sheng-Zuo proposed the following conjecture:
\begin{conjec}[\protect{\cite[Conjecture 1.5]{lan2013semistablehiggsbundlesrepresentations}}]\label{conj: semistable is strongly semistable}
Every nilpotent semistable Higgs bundle of exponent $\leq p-1$ is strongly semistable.
\end{conjec}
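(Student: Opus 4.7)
The plan is to split the conjecture along the genus of the curve: establish it for $g\le 1$ and, in contrast, construct explicit counterexamples for $g\ge 2$. The common engine in both directions is the inverse Cartier transform $C^{-1}$ of Ogus--Vologodsky. Indeed, strong semistability of a nilpotent Higgs bundle $(\cE,\theta)$ of exponent $\le p-1$ amounts to the condition that every step of the Higgs--de Rham flow $(\cE_n,\theta_n)\leadsto(\cE_{n+1},\theta_{n+1}):=\gr\, C^{-1}(\cE_n,\theta_n)$ again yields a Higgs-semistable bundle, so the problem reduces to controlling a single step of the flow and then iterating.

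For $g=0$, every semistable vector bundle on $\P^1$ is of the form $\cO(a)^{\oplus r}$. A nilpotent Higgs field of exponent $\le p-1$ is thus a strictly upper-triangular endomorphism twisted by $\Omega^1_{\P^1}$, and the local description of $C^{-1}$ as Frobenius pullback corrected by the exponential of the Higgs field is explicit enough to verify directly that the output is again semistable; this reduces ultimately to degree bookkeeping, since $F^*$ scales degrees by $p$ and uniform bundles on $\P^1$ are automatically semistable. For $g=1$, I would lean on Atiyah's classification: a semistable bundle on an elliptic curve is a direct sum of indecomposables of common slope, each of which stays semistable after Frobenius pullback, while nilpotency of $\theta$ forces a block-triangular form compatible with this decomposition that propagates cleanly through the flow.

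For $g\ge 2$ the strategy is instead to lift Gieseker's Frobenius-destabilization phenomenon from vector bundles to Higgs bundles. One starts from a semistable rank-$2$ bundle $\cV$ whose Frobenius pullback acquires a destabilizing subsheaf, and couples it with an auxiliary line bundle $\mathcal{L}$ via an extension to form a rank-$3$ Higgs bundle $(\cE,\theta)$ whose Higgs field, by rank and type considerations, is automatically nilpotent of exponent $\le 1 \le p-1$ in characteristic $p=2$. The numerical data (degrees of $\cV$ and $\mathcal{L}$ and the class in the appropriate $\ext^1$) must be tuned so that $(\cE,\theta)$ itself is Higgs-semistable while $\gr\, C^{-1}(\cE,\theta)$ is not, giving the claimed failure of strong semistability.

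The hard part will be the $g\ge 2$ construction: one must simultaneously secure semistability of the initial Higgs bundle, the exponent bound on $\theta$, and genuine loss of semistability after one or more iterations of the flow, all while keeping the rank small. Unlike ordinary Frobenius pullback, the slope arithmetic under $C^{-1}$ mixes the Frobenius scaling with extension data arising from $\theta$, so that the destabilizing subsheaf in $F^*\cV$ must be arranged to interact with the Higgs field in the right way. Verifying the existence of the required extension classes, and checking stability both before and instability after applying $\gr\, C^{-1}$, is where the bulk of the technical work — and the fine tuning to the minimal case $p=2$, $\rk=3$ — will be concentrated.
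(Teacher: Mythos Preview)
Your high-level split by genus matches the paper, but there is a genuine conceptual gap in the $g\ge 2$ direction. You propose that strong semistability fails because $\mathrm{Gr}\,C^{-1}(\mathcal{E},\theta)$ is not Higgs-semistable, and plan to engineer this via a Gieseker-style Frobenius destabilization. This cannot work: by the gr-semistable filtration theorem of Lan--Sheng--Yang--Zuo and Langer, every $\nabla$-semistable flat bundle admits \emph{some} Griffiths transverse filtration whose associated graded Higgs bundle is semistable. Since the inverse Cartier transform of a semistable Higgs bundle is $\nabla$-semistable, one can always choose a filtration making the next Higgs term semistable. The Higgs--de Rham flow is not canonical---the filtration at each step is a choice---and strong semistability asks only for the \emph{existence} of one semistable flow. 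Loss of semistability after a step is therefore never the obstruction, and any construction aimed at producing it will be defeated by the existence of gr-semistable filtrations.

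The actual obstruction, which the paper exploits, is the nilpotency exponent: one must arrange that \emph{every} gr-semistable filtration on $C^{-1}(\mathcal{E},\theta)$ yields an associated graded Higgs bundle of exponent exactly $p$, so that the flow cannot be continued inside $\HIG_{p-1}$. The paper builds this directly. Set $\mathcal{E}'=F_*\mathcal{L}$ (stable of rank $p$ by Sun's theorem), so the canonical filtration on $F^*\mathcal{E}'$ has length $p$ with graded Higgs field of exponent $p-1$; then adjoin a line-bundle summand with an off-diagonal connection term to obtain a $\nabla$-stable rank-$(p+1)$ flat bundle $(\mathcal{V}_\ell,\nabla_\ell)$ whose unique gr-semistable filtration has length $p+1$ and graded exponent $p$. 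Uniqueness comes from stability of the graded, and a lemma comparing exponents across $S$-equivalent gradeds certifies no other filtration can do better; the desired Higgs bundle is then $C(\mathcal{V}_\ell,\nabla_\ell)$. Your Gieseker picture targets the wrong invariant. (For $g=1$, incidentally, the paper bypasses Atiyah entirely: since $\deg K_E=0$, any $\nabla$-semistable flat bundle has semistable underlying bundle, so the trivial filtration works at every step and reduces the question to ordinary strong semistability of vector bundles on $E$.)
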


In (\cite[Theorem 1.1]{MR3189770}), Li proved the conjecture for the rank three case over curves under the assumption $\mathrm{char}(k)\ge3$. Accepted as a common terminology nowadays, the rank of a vector bundle is said to be \emph{small}, when it is $\leq p$, and hence will be called \emph{big} in this paper, if the otherwise holds. Hence it remains undetermined whether Li's result can be extended to the first big rank case, namely $p=2$ and rank $3$. On the other hand, the next result extends Li's result to an arbitrary small rank, which shall be called the \emph{strong semistability theorem}.
\begin{thm}[\protect{\cite[Theorem A.1]{MR3994100},~\cite[Theorem 5.5]{MR3218782}}]\label{thm: LSZ conjecture holds for small ranks}
Conjecture \ref{conj: semistable is strongly semistable} holds for small rank Higgs bundles.  
\end{thm}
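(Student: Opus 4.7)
The plan is to analyze the Higgs--de Rham flow of Lan--Sheng--Zuo and to show that, under the small rank hypothesis $\mathrm{rk}\leq p$, each of its two constituent operations preserves semistability. Recall that, starting from a nilpotent semistable Higgs bundle $(E_0,\theta_0)$ of exponent $\leq p-1$ on the $W_2(k)$-lift of $X$, the flow produces a sequence $(E_n,\theta_n)$ by alternating the inverse Cartier transform $C^{-1}$ of Ogus--Vologodsky---which yields a flat bundle $(V_n,\nabla_n)$ equipped with a Hodge filtration $\mathrm{Fil}^{\bullet}$ of length $\leq p-1$---with the grading functor $\mathrm{gr}$, which produces the next Higgs bundle. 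By definition, strong semistability of $(E_0,\theta_0)$ is the assertion that every $(E_n,\theta_n)$ is semistable, so it suffices to prove that the functor $\mathrm{gr}\circ C^{-1}$ preserves semistability within the class of nilpotent Higgs bundles of exponent $\leq p-1$ and rank $\leq p$.

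The argument splits into two lemmas. The first asserts that $C^{-1}$ sends semistable nilpotent Higgs bundles of exponent $\leq p-1$ to semistable flat bundles; this follows from the relation $F^{*}(E,\theta)\simeq \mathrm{gr}\,C^{-1}(E,\theta)$ combined with the fact, essentially due to Langer, that Frobenius pullback preserves semistability of nilpotent Higgs bundles under the exponent bound, the underlying ingredient being a Bogomolov-style discriminant estimate. The second, which is the heart of the matter, asserts that for a semistable flat bundle $(V,\nabla)$ of rank $\leq p$ carrying a Hodge filtration of length $\leq p-1$, the Higgs bundle $\mathrm{gr}(V,\mathrm{Fil},\nabla)$ is semistable. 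Assuming for contradiction that $(F,\theta_F)$ is a destabilizing Higgs subsheaf of $\mathrm{gr}(V,\mathrm{Fil})$, I would lift it to a subsheaf $\widetilde F \subset V$ by $\widetilde F := \sum_i \pi_i^{-1}(F\cap \mathrm{gr}^{i} V)$, where $\pi_i$ denotes the projection to the $i$-th graded piece. The sub-Higgs condition $\theta_F(F)\subset F\otimes \Omega_X^1$ then translates, via Griffiths transversality, into $\nabla$-stability of $\widetilde F$; consequently $\widetilde F$ contradicts the semistability of $(V,\nabla)$, since $\mu(\widetilde F) = \mu(F) > \mu(V)$.

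The rank bound $r\leq p$ enters this second step because it forces the Hodge filtration to have at most $p-1$ nontrivial jumps, which is precisely the regime in which Griffiths transversality translates cleanly into a sub-Higgs condition on graded pieces without correction terms. In the big rank regime the lift $\widetilde F$ generically fails to be $\nabla$-stable, and obstructions of Kodaira--Spencer type appear; the counterexamples constructed later in the paper may be viewed as explicit realizations of such obstructions.

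The main obstacle is precisely this second step---verifying the exactness of the sub-Higgs versus sub-flat correspondence in the small rank range and tracking how the exponent bound survives the process so that $\mathrm{gr}\circ C^{-1}$ remains well-defined on the next iterate. Once this is in place, strong semistability follows by induction on the step count of the Higgs--de Rham flow, since both the exponent bound and semistability are preserved by $\mathrm{gr}\circ C^{-1}$ for rank $\leq p$.
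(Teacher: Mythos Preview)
The paper does not prove this theorem; it is cited from Langer and Lan--Sheng--Yang--Zuo. What the paper does record (in Section~2) is the key input: for \emph{any} $\nabla$-semistable flat bundle $(\mathcal V,\nabla)$, there \emph{exists} a gr-semistable Griffiths transverse filtration. Combined with the fact that $C^{-1}$ preserves semistability, this yields strong semistability once one knows the flow can continue---and that is where the small-rank hypothesis actually enters: if $\mathrm{rk}\leq p$, then any nilpotent Higgs field automatically has exponent $\leq p-1$, so the graded Higgs bundle produced at each step lies in $\HIG_{p-1}$ and $C^{-1}$ may be applied again.

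Your argument has two genuine gaps. First, your ``second lemma'' is false as stated: it is \emph{not} true that an arbitrary Hodge filtration on a semistable flat bundle has semistable associated graded. The lifting construction you propose, $\widetilde F=\sum_i\pi_i^{-1}(F\cap\mathrm{gr}^iV)$, does not produce a $\nabla$-invariant subsheaf with the same slope as $F$; a Higgs subsheaf of the graded object simply does not lift to a flat subsheaf of the filtered object in general (if it did, every Griffiths transverse filtration would be gr-semistable, and the paper's counterexamples would be impossible even in big rank). The actual proofs in the cited references do not show that a given filtration works; they \emph{construct} a specific gr-semistable filtration by an iterative Simpson-type procedure, refining via Harder--Narasimhan filtrations of successive graded objects until the process stabilizes.

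Second, you have the role of the rank bound inverted. The existence of a gr-semistable filtration holds in \emph{every} rank; the hypothesis $\mathrm{rk}\leq p$ is used only to guarantee that the graded Higgs bundle output at each step again has exponent $\leq p-1$, so that the inverse Cartier transform is defined on it and the flow continues. In big rank this exponent bound can fail after one step---which is precisely the mechanism exploited in the counterexamples of Section~4.
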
 
\begin{remk}
We remark that the proof of Langer's celebrated theorem \cite[Theorem 2]{MR3314517} on the Bogomolov inequality for semistable Higgs bundles of small rank extends verbatim to strongly semistable Higgs bundles. Hence the Bogomolov inequality also holds for strongly semistable Higgs bundles.  
\end{remk}
The main purpose of this paper is to show that the above results are optimal in the curve case. Our main results are summarized as follows:
\begin{thm}\label{main result}
Let $p$ be any prime number and $C$ be a smooth projective curve over $k$ with $\mathrm{char}(k)=p$. The following three statements hold:
\begin{enumerate}
\item[(1)] If $g(C)\leq 1$, then Conjecture \ref{conj: semistable is strongly semistable} holds true;
\item[(2)] If $g(C)\geq 2$, then for any big rank $r$, there exists a rank $r$ nilpotent semistable Higgs bundle of exponent $\leq 1$ over $C$ which is not strongly semistable;
\item[(3)] Suppose $p$ is odd and $C$ is generic of $g(C)\ge2$, or $p=2$ and $g(C)\ge2$. Then there exist a nilpotent strongly semistable Higgs bundles of exponent $\leq 1$ and that of exponent $=0$ such that their tensor product is not strongly semistable.
\end{enumerate}
\end{thm}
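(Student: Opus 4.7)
The plan is to handle the three parts of Theorem \ref{main result} independently, since each requires a different method. For part (1), I would rely on the rigidity of small-genus curves: on $\P^1$, Grothendieck's theorem combined with $\Omega^1_{\P^1}=\cO(-2)$ forces every semistable Higgs bundle to have zero Higgs field and underlying bundle $\cO(a)^{\oplus r}$, and the Higgs-de Rham flow merely rescales $a$ by $p$, so semistability is preserved trivially. On an elliptic curve, $\Omega^1_E\cong\cO_E$ and Higgs fields are endomorphisms; I would combine Atiyah's classification with the fact that Frobenius preserves semistability of vector bundles on $E$, and compute the inverse Cartier transform and Simpson filtration explicitly to verify that the Higgs-de Rham flow stays inside the semistable locus.

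For part (2), I would start with a stable vector bundle $W$ on $C$ such that $F^*W$ is not semistable; these exist on every curve of genus $g\ge 2$ by Gieseker and Joshi--Ramanan--Xia--Yu. The first big-rank counterexample ($p=2$, $r=3$) is to be constructed as a nilpotent Higgs bundle $(V_0\oplus V_1,\theta)$ with $\theta:V_0\to V_1\otimes\Omega^1_C$, arranged so that $W$ sits as a summand of $V_0$ or $V_1$ and the slopes are balanced to force Higgs-semistability of $(V,\theta)$, yet the inverse Cartier transform $C^{-1}(V,\theta)$ propagates the Frobenius instability of $W$ into a destabilizing Higgs subsheaf of the Simpson graded. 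The key technical step will be to write $C^{-1}(V_0\oplus V_1,\theta)$ completely explicitly via the Ogus--Vologodsky formula for exponent $\le 1$. Arbitrary big rank is then obtained by taking direct sums with trivially strongly semistable summands, or by iterating the construction with higher-rank Frobenius-destabilized seeds.

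For part (3), I would pick a strongly semistable vector bundle $W$ with $F^*W$ not semistable---Joshi--Ramanan for generic $C$ of $g(C)\ge 2$ in odd characteristic and a direct construction for all $C$ of $g(C)\ge 2$ in characteristic $2$---together with a strongly semistable Higgs bundle $(V,\theta)$ of exponent $\le 1$ of oper type, e.g.\ $V=L\oplus(L^{-1}\otimes\Omega^1_C)$ with $\theta$ an isomorphism. Since the inverse Cartier transform is a tensor functor on nilpotent Higgs bundles whose exponents sum to $\le p-1$, we have $C^{-1}((V,\theta)\otimes(W,0))\cong C^{-1}(V,\theta)\otimes F^*W$, so the Frobenius instability of $F^*W$ feeds directly into the graded of the Simpson filtration and destroys semistability of the tensor product. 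The main obstacle throughout will be verifying that the proposed seed bundles are themselves strongly semistable in the LSZ sense, which amounts to showing that iterated Simpson filtrations do not manufacture new instability---this is where genericity of $C$ in odd characteristic, and the peculiarities of characteristic $2$, play a role.
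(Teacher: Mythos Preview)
Your treatment of part (1) is essentially correct and close to the paper's; note however that on an elliptic curve the argument is simpler than you suggest. Since $\deg\Omega_E^1=0$, the maximal destabilizing subsheaf of the underlying bundle of a $\nabla$-semistable flat bundle would have to be $\nabla$-invariant, hence the underlying bundle is already semistable and one may take the \emph{trivial} filtration at every step of the flow. No Atiyah classification or explicit Simpson filtration is needed.

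For parts (2) and (3) there is a genuine gap: the mechanism you propose---Frobenius instability of a seed bundle producing a destabilizing Higgs subsheaf in the Simpson graded---cannot work as stated. By the Langer and Lan--Sheng--Yang--Zuo existence theorem (the result underlying Theorem~\ref{thm: LSZ conjecture holds for small ranks}), every $\nabla$-semistable flat bundle admits a gr-semistable filtration. Since $C^{-1}$ of a semistable Higgs bundle is $\nabla$-semistable, a Griffiths transverse filtration with \emph{semistable} graded always exists; strong semistability therefore never fails because ``the graded is unstable''. It can only fail because every gr-semistable filtration yields a graded of nilpotency exponent $\geq p$, so that $C^{-1}$ cannot be applied again. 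Your argument exhibits one bad filtration (the tensor/Simpson one) but does not rule out the others.

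The paper's route differs in precisely this respect. For (2) it builds, via Sun's theorems on $F_*\mathcal{L}$ and its canonical filtration, an explicit $\nabla$-stable rank $p+1$ flat bundle whose \emph{unique} gr-semistable filtration has \emph{stable} graded of exponent exactly $p$; uniqueness comes from Lemma~\ref{lemma: uniqueness of gr-semistable fil}. For (3) the paper tensors a uniformizing-type rank two Higgs bundle with $(F_*K_C^{(1-p)/2},0)$---so your oper-plus-Frobenius intuition is on track---but again the obstruction is the exponent: the natural tensor filtration on the inverse Cartier transform is gr-\emph{polystable} with graded of exponent $=p$, and Lemma~\ref{lemma: polystable representative has minimal exponent} shows that the polystable graded realises the minimal exponent among all gr-semistable gradeds. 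That lemma is the key device you are missing; without it, or some other way to control \emph{all} Griffiths transverse filtrations simultaneously, the argument does not close.
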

Note that for $C=\mathbb{P}^1$, Conjecture \ref{conj: semistable is strongly semistable} holds trivially, since every semistable Higgs bundle on $\mathbb{P}^1$ is a direct sum of line bundles of the same degree, equipped with the zero Higgs field. We believe $(3)$ remains true when $p$ is odd without assuming that $C$ is generic. This result negates our earlier hope, although the category of semistable Higgs bundles over $C$ does not form a tensor category, the subcategory consisting of strongly semistable Higgs bundles might enjoy this property, making a stronger analogue to the category of semistable Higgs bundles in characteristic zero. 

The fundamental theorem of Deligne-Illusie in Hodge theory in positive characteristic asserts that, for a $W_2(k)$-liftable smooth proper $k$-variety of dimension $\leq p$, the Hodge to de Rham spectral sequence degenerates at $E_1$-page. In a very recent work by A. Petrov \cite[Theorem 1.1]{NondecomposabilityofthedeRhamcomplex}, he constructed a $W_2(k)$-liftable smooth proper $k$-variety of dimension $p+1$ such that the $E_1$-degeneration property fails. Our result may be viewed as an analogue in non-abelian Hodge theory in positive characteristic.

\subsection*{Acknowledgment}
The authors would like to thank Jianping Wang for his comments on Remark \ref{remk: LSZ on higher dimensional cases}, which improves previous result, and to thank Lingguang Li, for his careful reading of the manuscript and for providing valuable comments which improved the paper. They would like to thank heartily an anonymous referee for scrutinizing the work and providing valuable comments and suggestions, which have greatly improved the quality of the paper. The second named author is supported by the Chinese Academy of Sciences Project for Young Scientists in Basic Research (Grant No. YSBR-032).

\section{Preliminaries on Higgs-de Rham flows}
Let $X$ be a nonsingular projective variety over $k$ and $\omega$ be an ample line bundle on $X$. A vector bundle over $X$ in the following means a torsion free coherent sheaf over $X$. 
\begin{defn}
Let $(\mathcal{E},\theta)$ be a Higgs bundle on $X$.
\begin{enumerate}
\item[(1)] We say that $(\mathcal{E}, \theta)$ is \emph{nilpotent of exponent $<\ell$} if for any open subset $U \subset X$ and any collection $\left\{\partial_{x_1}, \ldots,\partial_{x_{\ell}}\right\}$ of sections of $\mathcal{T}_X(U)$, we have $$\theta\left(\partial_{x_1}\right) \ldots \theta\left(\partial_{x_{\ell}}\right)=0.$$
\item[(2)] We say that $(\mathcal{E}, \theta)$ is \emph{nilpotent of exponent $=\ell$} if it is nilpotent of exponent $<\ell+1$ but not nilpotent of exponent $<\ell$ (for $\ell=0$ we require simply that $(\mathcal{E},\theta)$ is nilpotent of exponent $<1$).
\item[(3)] We say that $(\mathcal{E}, \theta)$ is \emph{nilpotent of exponent $\leq\ell$} if it is either nilpotent of exponent $<\ell$ or $=\ell$.
\end{enumerate}
\end{defn}

Let $(\mathcal{V},\nabla)$ be a flat bundle on $X$. Its $p$-curvature $\psi_{\nabla}: \mathcal{V}\to \mathcal{V}\otimes F^*\Omega_X$ is an $F$-Higgs field. By replacing $(\mathcal{E},\theta)$ by $(\mathcal{V},\psi_{\nabla})$ in the previous definition, we arrive at the corresponding definition for $(\mathcal{V},\nabla)$. Namely, it is said to be nilpotent of exponent $<\ell$, if its $p$-curvature is so.  
\begin{notation}
The category consisting of Higgs bundles which are nilpotent of exponent $\leq p-1$ is denoted by $\HIG_{p-1}(X)$, and the category consisting of flat bundles which are nilpotent of exponent $\leq p-1$ is denoted by $\MIC_{p-1}(X)$.
\end{notation}

\begin{lemma}
Let $(\mathcal{E},\theta)$ be a Higgs bundle on $X$. Then the following statements are equivalent:
\begin{enumerate}
\item[(1)] $(\mathcal{E}, \theta)$ is nilpotent of exponent $<\ell$. 
\item[(2)] There exists a filtration of saturated Higgs subsheaves 
$$
0=\mathcal{F}^m \subset \mathcal{F}^{m-1} \subset \ldots \subset \mathcal{F}^0=(\mathcal{E}, \theta)
$$ 
of length $m \leq \ell$ such that the associated graded Higgs bundle has zero Higgs field.
\end{enumerate}
\end{lemma}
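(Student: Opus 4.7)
The plan is to realise the filtration of (2) as the \emph{kernel filtration} of iterated Higgs fields, and to read off the exponent bound of (1) from any filtration of the prescribed shape by a direct filtration-shifting argument in the opposite direction. Using integrability $\theta\wedge\theta=0$, I would define iteratively
$$\theta^{[i]}:\mathcal{E}\longrightarrow\mathcal{E}\otimes\sym^{i}\Omega_X,\qquad \theta^{[0]}=\id,\quad \theta^{[i+1]}=(\theta\otimes\id)\circ\theta^{[i]}.$$
In local coordinates $x_{1},\dots,x_{n}$, the components of $\theta^{[\ell]}$ in the monomial basis of $\sym^{\ell}\Omega_X$ are precisely the endomorphisms $\theta(\partial_{x_{j_{1}}})\cdots\theta(\partial_{x_{j_{\ell}}})\in\End(\mathcal{E})$. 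Consequently, $(\mathcal{E},\theta)$ is nilpotent of exponent $<\ell$ if and only if $\theta^{[\ell]}=0$, which makes both conditions amenable to a uniform treatment.

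For the easy direction $(2)\Rightarrow(1)$, the vanishing of the graded Higgs field translates into $\theta(\mathcal{F}^{i})\subset\mathcal{F}^{i+1}\otimes\Omega_X$. Each endomorphism $\theta(\partial_{x})$ therefore shifts the filtration by one step, so an arbitrary $\ell$-fold composition $\theta(\partial_{x_{1}})\cdots\theta(\partial_{x_{\ell}})$ carries $\mathcal{F}^{0}=\mathcal{E}$ into $\mathcal{F}^{\ell}\subset\mathcal{F}^{m}=0$, which is exactly the exponent bound.

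For $(1)\Rightarrow(2)$, let $m\leq\ell$ be the least integer with $\theta^{[m]}=0$ and set $\mathcal{F}^{j}:=\ker\bigl(\theta^{[m-j]}\bigr)$ for $0\leq j\leq m$, so that $\mathcal{F}^{0}=\mathcal{E}$ and $\mathcal{F}^{m}=0$. Since $\mathcal{E}\otimes\sym^{i}\Omega_X$ is torsion-free in the vector bundle setting, each kernel $\ker\theta^{[i]}$ is automatically a saturated subsheaf of $\mathcal{E}$. The substantive verification is that $\theta(\ker\theta^{[i]})\subset\ker(\theta^{[i-1]})\otimes\Omega_X$; locally this is the tautology that a length-$i$ composition $\theta(\partial_{x_{j_{1}}})\cdots\theta(\partial_{x_{j_{i-1}}})\bigl(\theta(\partial_{x_{j_{i}}})(s)\bigr)$ is the length-$(i-1)$ composition applied to $\theta(\partial_{x_{j_{i}}})(s)$, so vanishing of all length-$i$ products on $s$ forces $\theta(\partial_{x})(s)\in\ker\theta^{[i-1]}$ for every $\partial_{x}$. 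This simultaneously shows that each $\mathcal{F}^{j}$ is a Higgs subsheaf and that the induced Higgs field on the quotient $\mathcal{F}^{j}/\mathcal{F}^{j+1}$ vanishes, as required.

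The only (modest) obstacle is the bookkeeping around integrability when passing between the symmetric-power incarnation of $\theta^{[i]}$ and its componentwise description via iterated contractions with tangent vectors; once that dictionary is set up, both implications become formal.
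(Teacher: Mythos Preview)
Your proof is correct and follows essentially the same approach as the paper: both build the filtration from iterated kernels (the paper does this inductively by repeatedly modding out by $\ker\theta$, which unwinds to exactly your direct definition $\mathcal{F}^{j}=\ker\theta^{[m-j]}$), and the $(2)\Rightarrow(1)$ arguments are identical. Your observation that the kernel of a map into a torsion-free sheaf is automatically saturated is slightly slicker than the paper's explicit verification that $\widetilde{\ker\theta}=\ker\theta$, but this is a presentational streamlining rather than a different idea.
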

\begin{proof}
For (1) to (2): Note that a Higgs bundle $(\mathcal{E},\theta)$ is equivalent to a $\mathrm{Sym}^{\bullet}\mathcal{T}_X$-module $\mathcal{E}$, and we denote $\theta^i(\mathcal{E}):=\im\{\mathrm{Sym}^i\mathcal{T}_X\otimes\mathcal{E}\to\mathcal{E}\}, i\geq 0$, which is a coherent subsheaf of $\mathcal{E}$. Since $(\mathcal{E},\theta)$ is nilpotent of exponent $<\ell$, there exists a unique $m\leq\ell$ such that $\theta^m(\mathcal{E})=0$ and $\theta^{m-1}(\mathcal{E})\neq0$. In particular, we have $0\neq\theta^{m-1}(\mathcal{E})\subseteq\ker\theta$. Now we prove $\ker\theta$ is saturated: Let $\pi\colon\mathcal{E}\to\mathcal{E}/\ker\theta$ be the projection. Then $\widetilde{\ker\theta}=\pi^{-1}\left((\mathcal{E}/\ker\theta)_{\text{tor}}\right)$ is the saturation of $\ker\theta$, which is a Higgs subbundle as the quotient Higgs field preserves $(\mathcal{E}/\ker\theta)_{\text{tor}}$. Since $(\widetilde{\ker\theta},\theta|_{\widetilde{\ker\theta}})$ coincides with $(\ker\theta,0)$ at the generic point, we have $\theta|_{\widetilde{\ker\theta}}=0$. This shows $\widetilde{\ker\theta}\subseteq\ker\theta$, and thus $\ker\theta$ is saturated. 

This gives an exact sequence of Higgs bundles
$$
0\to(\ker\theta,0)\to(\mathcal{E},\theta)\to(\mathcal{E}_1,\theta_1)\to0,
$$
and $(\mathcal{E}_1,\theta_1)$ is nilpotent of exponent $<\ell-1$. Otherwise $\theta^{\ell}(\mathcal{E})/\ker\theta=\theta_1^{\ell-1}(\mathcal{E}_1)\neq0$, a contradiction. Then by induction on $\ell$ this completes the proof of (1) to (2).

For (2) to (1): Let $U\subset X$ be a nonempty open subset and $\{\partial_{x_1},\dots,\partial_{x_{\ell}}\}$ be any collection of sections of $\mathcal{T}_X(U)$. Since the graded Higgs field on $\mathcal{F}^i/\mathcal{F}^{i+1}$ vanishes, it follows that $\theta\left(\partial_{x_j}\right)v\in \mathcal{F}^{i+1}$ for any local section $v\in \mathcal{F}^{i}$. Thus for any local section $v$ of $\mathcal{F}^0=\mathcal{E}$, we have $\theta\left(\partial_{x_{\ell}}\right)v$ is a local section of $\mathcal{F}^1$, and $\theta\left(\partial_{x_{\ell-1}}\right)\theta\left(\partial_{x_{\ell}}\right)v$ is a local section of $\mathcal{F}^{2}$, and so on. Therefore, $\theta\left(\partial_{x_1}\right) \ldots \theta\left(\partial_{x_{\ell}}\right)v$ lies in $\mathcal{F}^m=0$ as long as $\ell\geq m$. That is, $\theta\left(\partial_{x_1}\right) \ldots \theta\left(\partial_{x_{\ell}}\right)=0$ if $\ell\ge m$.
\end{proof}
\begin{remk}
The same statement holds for flat bundles by replacing Higgs fields by $p$-curvatures.
\end{remk}
A. Ogus and V. Vologodsky established the non-abelian Hodge correspondence in positive characteristic, and a special case of it can be stated as follows:
\begin{thm}[\protect{\cite[Theorem 2.8]{MR2373230}}]
If $X$ is $W_2(k)$-liftable, then there is an equivalence of categories between $\HIG_{p-1}(X)$ and $\MIC_{p-1}(X)$, where the correspondence is given by the inverse Cartier transform.
\end{thm}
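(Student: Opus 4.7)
The plan is to construct the inverse Cartier transform $C^{-1}\colon\HIG_{p-1}(X)\to\MIC_{p-1}(X)$ explicitly from the $W_2(k)$-lift via a Deligne--Illusie type construction, and then to exhibit a two-sided inverse built from the $p$-curvature together with a suitable Cartier descent. The background object organizing everything is the sheaf $\mathcal{D}_X^{(0)}$ of crystalline differential operators on $X$, which is an Azumaya algebra over the cotangent scheme of the Frobenius twist $X'$; the $W_2(k)$-lift will ultimately be used to produce a splitting of this Azumaya algebra on the formal neighborhood of the nilpotent locus of order $\leq p-1$.

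First I would define $C^{-1}$ locally. On an affine open $U\subset X$, choose a lift $\widetilde F\colon\widetilde U\to\widetilde{U'}$ of the relative Frobenius to $W_2(k)$, whose existence is guaranteed by the lifting hypothesis. The Deligne--Illusie construction produces a well-defined map $\zeta_{\widetilde F}:=d\widetilde F/p\colon F^*\Omega_{X'}\to\Omega_X$. For a Higgs bundle $(\mathcal E,\theta)$, I would set
\[
C^{-1}(\mathcal E,\theta):=\bigl(F^*\mathcal E,\ \nabla_{\mathrm{can}}+(\mathrm{id}\otimes\zeta_{\widetilde F})\circ F^*\theta\bigr),
\]
where $\nabla_{\mathrm{can}}$ is the canonical flat connection on the Frobenius pullback. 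A short computation confirms that this connection is flat and has $p$-curvature equal to the Frobenius pullback of $\theta$, so the output indeed lies in $\MIC_{p-1}(U)$.

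Next, I would verify that two different choices of lift $\widetilde F_1,\widetilde F_2$ yield canonically isomorphic connections. Their difference $\widetilde F_1-\widetilde F_2=p\cdot h$ defines a derivation $h$, and the candidate gauge transformation is the formal exponential $\exp(h\cdot\theta)=\sum_{i\ge 0}\tfrac{1}{i!}(h\cdot\theta)^i$. Here the nilpotent exponent $\leq p-1$ hypothesis is essential: the series truncates at $i=p-1$, and all denominators $i!$ for $i\leq p-1$ are invertible in $k$. A cocycle check on triple overlaps then allows the local constructions to be glued into a global functor $C^{-1}$, which lands in $\MIC_{p-1}(X)$ because flatness, the $p$-curvature identity, and the nilpotent exponent bound are all preserved.

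The main obstacle will lie in the inverse direction. Given $(\mathcal V,\nabla)\in\MIC_{p-1}(X)$, its $p$-curvature $\psi_\nabla$ endows $\mathcal V$ with the structure of a module over $F^*\mathrm{Sym}^\bullet\mathcal T_{X'}$ whose support lies scheme-theoretically on the formal nilpotent locus of order $\leq p-1$ in $T^*X'$. The technical heart of the argument is to show that $\mathcal{D}_X^{(0)}$ splits as an Azumaya algebra on this formal neighborhood, with splitting canonically determined by the same $W_2(k)$-lift; Morita equivalence under this splitting then recovers a Higgs bundle. Compatibility of the two splittings (the one used to define $C^{-1}$ and the one splitting the Azumaya structure) is precisely what would force the two constructions to be mutually inverse, yielding the claimed equivalence of categories.
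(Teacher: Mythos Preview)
The paper does not prove this theorem at all: it is quoted verbatim as a result of Ogus--Vologodsky, and the only additional comment is the remark that for a trivial Higgs field the inverse Cartier transform reduces to Frobenius pullback, together with a pointer to the exponential-twisting description in \cite{MR3350108}. So there is no ``paper's own proof'' to compare against; the statement functions as background input, not as something established here.

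Your sketch is a reasonable outline of the standard argument, combining the explicit local exponential-twisting construction (which is exactly the viewpoint of \cite{MR3350108} that the paper alludes to) with the Azumaya/Morita framework of Ogus--Vologodsky for the inverse direction. Two small points worth tightening if you actually carry this out: first, be careful about whether your Higgs bundles live on $X$ or on the Frobenius twist $X'$, since your local formula mixes $\Omega_{X'}$ with an unspecified base for $(\mathcal E,\theta)$; in the paper's setting $k$ is algebraically closed (hence perfect) and one silently identifies $X\cong X'$, but the formulas only make sense once this is made explicit. Second, the claim that the two splittings of the Azumaya algebra agree is genuinely the crux and is not automatic from what you wrote; in Ogus--Vologodsky this is the content of their explicit construction of the splitting module from the lift, and you would need to either reproduce that or cite it.
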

\begin{remk}
The inverse Cartier transform of a vector bundle $\mathcal{E}$ is the Frobenius pullback $F^*\mathcal{E}$. If the Higgs field is non-trivial, the inverse Cartier transform can be explained as an exponential twisting of Frobenius map by Higgs field in \cite{MR3350108}.
\end{remk}

From now on, $X$ is assumed to be $W_2(k)$-liftable.
\begin{defn}
Let $(\mathcal{E},\theta)$ be a nilpotent Higgs bundle on $X$ of exponent $\leq p-1$. A \emph{Higgs-de Rham flow} with initial term $(\mathcal{E},\theta)$ is an infinite sequence
\begin{center}
\begin{tikzcd}
                                                                     & {(\mathcal{V}_0,\nabla_0)} \arrow[rd, "\mathrm{Gr}_{\mathcal{N}_0}"] &                                                 & {(\mathcal{V}_1,\nabla_1)} \arrow[rd, "\mathrm{Gr}_{\mathcal{N}_1}"] &       \\
{(\mathcal{E}_0,\theta_0)=(\mathcal{E},\theta)} \arrow[ru, "C^{-1}"] &                                         & {(\mathcal{E}_1,\theta_1)} \arrow[ru, "C^{-1}"] &                                         & \dots
\end{tikzcd}
\end{center}
in which $C^{-1}$ is the inverse Cartier transform with respect to a given $W_2(k)$-lifting $X\hookrightarrow\widetilde{X}$ and $\mathcal{N}_i^{\bullet}$ is a Griffiths transverse filtration of $(\mathcal{V}_i,\nabla_i)$ such that $(\mathcal{E}_{i+1},\theta_{i+1})$ is the associated graded Higgs bundle for each $i$ which is nilpotent of exponent $\leq p-1$.
\end{defn}
\begin{defn}
A Higgs-de Rham flow is called an \emph{$\omega$-semistable Higgs-de Rham flow}, if each $(\mathcal{E}_i,\theta_i)$ is an $\omega$-semistable Higgs bundle. 
\end{defn}
\begin{defn}
A semistable Higgs bundle is called \emph{strongly $\omega$-semistable}, if it initializes an $\omega$-semistable Higgs-de Rham flow.
\end{defn}
\begin{ex}
If $\mathcal{E}$ is a strongly $\omega$-semistable vector bundle, then $(\mathcal{E},0)$ is a strongly $\omega$-semistable Higgs bundle since it initializes the following $\omega$-semistable Higgs-de Rham flow
\begin{center}
\begin{tikzcd}
                                                                     & {(F^*\mathcal{E},\nabla_{\can})} \arrow[rd, "\mathrm{Gr}_{\mathcal{N}_0}"] &                                                 & {((F^2)^*\mathcal{E},\nabla_{\can})} \arrow[rd, "\mathrm{Gr}_{\mathcal{N}_1}"] &       \\
{(\mathcal{E},0)} \arrow[ru, "C^{-1}"] &                                         & {(F^*\mathcal{E},0)} \arrow[ru, "C^{-1}"] &                                         & \dots,
\end{tikzcd}
\end{center}
where $\mathcal{N}_{i}^{\bullet}$ is the trivial filtration for each $i$. This shows the notion of strong semistability for a Higgs bundle generalizes naturally the notion of strong semistability for a vector bundle.
\end{ex}

From now on, we will omit to mention the fixed ample line bindle $\omega$ when we consider (semi)stability.
\begin{defn}
Let $(\mathcal{V},\nabla)$ be a flat bundle. A \emph{gr-semistable} filtration is a Griffiths transverse saturated filtration on $(\mathcal{V},\nabla)$ such that the associated graded Higgs bundle is semistable. 
\end{defn}

The following existence theorem of gr-semistable filtration on semistable flat bundles is the key to the strong semistability theorem (Theorem \ref{thm: LSZ conjecture holds for small ranks}).
\begin{thm}[\protect{\cite[Theorem A.4]{MR3994100},~\cite[Theorem 5.5]{MR3218782}}]
If $(\mathcal{V},\nabla)$ is a $\nabla$-semistable flat bundle, then there exists a gr-semistable filtration on $(\mathcal{V},\nabla)$.
\end{thm}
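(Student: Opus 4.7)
The plan is to construct a gr-semistable filtration iteratively, starting from a crude initial filtration and successively refining it using Higgs-destabilizing subsheaves of the associated graded; termination is forced by the $\nabla$-semistability hypothesis.

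First I would fix a starting Griffiths transverse saturated filtration $\mathcal{N}^\bullet$ of $(\mathcal{V},\nabla)$, for instance the trivial filtration $0\subset\mathcal{V}$ whose associated graded is $(\mathcal{V},0)$. If $(\gr_\mathcal{N}\mathcal{V},\overline\nabla)$ is already Higgs-semistable, we are done. Otherwise, let $\mathcal{G}\subset\gr_\mathcal{N}\mathcal{V}$ be the maximal destabilizing Higgs subsheaf. It decomposes as $\mathcal{G}=\bigoplus_i\mathcal{G}^i$ with $\mathcal{G}^i\subset\mathcal{N}^i/\mathcal{N}^{i+1}$, and the saturated preimages $\widetilde{\mathcal{G}}^i\subset\mathcal{N}^i$ of $\mathcal{G}^i$ under $\mathcal{N}^i\twoheadrightarrow\mathcal{N}^i/\mathcal{N}^{i+1}$ assemble into a strict refinement
$$
\ldots\supset\mathcal{N}^{i-1}\supset\widetilde{\mathcal{G}}^i\supset\mathcal{N}^i\supset\widetilde{\mathcal{G}}^{i+1}\supset\mathcal{N}^{i+1}\supset\ldots
$$
of $\mathcal{N}^\bullet$. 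This new filtration remains Griffiths transverse because the induced Higgs field on $\gr_\mathcal{N}\mathcal{V}$ preserves $\mathcal{G}$, so that $\nabla(\widetilde{\mathcal{G}}^i)\subset\widetilde{\mathcal{G}}^{i-1}\otimes\Omega_X$.

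Second, I would prove that the refinement procedure terminates. The $\nabla$-semistability of $(\mathcal{V},\nabla)$ is used here to bound $\mu_{\max}^{\mathrm{Hig}}$ of the graded in terms of $\mu(\mathcal{V})$: iteratively commuting $\nabla$ with projections onto graded pieces shows that a Higgs-destabilizing subsheaf of $\gr_\mathcal{N}\mathcal{V}$ yields, by passing to its $\nabla$-closure in $\mathcal{V}$, a $\nabla$-invariant subsheaf whose slope sits in a controlled window around $\mu(\mathcal{V})$. Combined with Langer's boundedness for families of saturated subsheaves of bounded slope and the rank bound on filtration length, this forces the procedure to stabilize after finitely many steps, and the terminal filtration is gr-semistable.

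The main obstacle, and the step I expect to be the hardest, is establishing the uniform slope bound at each stage of refinement: the subsheaves $\widetilde{\mathcal{G}}^i$ are themselves not $\nabla$-invariant, so their slopes are a priori uncontrolled; producing the bound requires analyzing how $\nabla$ propagates Higgs-destabilizing data across the filtration, which is essentially the positive-characteristic analogue of Simpson's argument relating semistability of flat bundles to semistability of the associated Higgs graded. A cleaner but equivalent formulation avoiding explicit termination is a Zorn/Noetherian argument: on the poset of Griffiths transverse saturated filtrations ordered by refinement, pick one whose graded has lexicographically smallest slope sequence; such a filtration must be gr-semistable, for otherwise the construction above produces a strict refinement, contradicting minimality.
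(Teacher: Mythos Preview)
The paper does not give its own proof of this theorem; it is quoted from \cite[Theorem~A.4]{MR3994100} and \cite[Theorem~5.5]{MR3218782} and used as a black box. So there is no proof in the paper to compare your proposal against.

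That said, your sketch has a genuine gap relative to the arguments in those references. Your iteration step is pure \emph{refinement}: you insert the preimages $\widetilde{\mathcal{G}}^i$ between consecutive terms of $\mathcal{N}^\bullet$. But a saturated filtration of a rank $r$ bundle has length at most $r$, so after finitely many steps the filtration is a full flag. At that point each graded piece is a line bundle, the maximal destabilizing Higgs subsheaf $\mathcal{G}$ is a sum of some of these line bundles, and the corresponding ``refinement'' does nothing: each $\widetilde{\mathcal{G}}^i$ equals either $\mathcal{N}^i$ or $\mathcal{N}^{i+1}$. Yet the graded Higgs bundle of a full flag can certainly fail to be semistable (take for instance the canonical filtration of $F^*F_*\mathcal{L}$ on a curve of genus $\ge 2$, shifted by one step). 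So the process stabilizes without reaching a gr-semistable filtration, and the Zorn/minimality variant fails for the same reason. (There is also a minor indexing slip: by your own definition $\mathcal{N}^{i+1}\subset\widetilde{\mathcal{G}}^i\subset\mathcal{N}^i$, not $\mathcal{N}^i\subset\widetilde{\mathcal{G}}^i\subset\mathcal{N}^{i-1}$ as displayed.)

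The construction in \cite{MR3994100,MR3218782} is not a refinement but a \emph{shift}: one defines a new filtration of the \emph{same} length by setting $\mathcal{N}'^i$ to be the preimage in $\mathcal{N}^{i-1}$ of $\mathcal{G}^{i-1}\subset\mathcal{N}^{i-1}/\mathcal{N}^i$ (with suitable conventions at the ends). The termination argument then tracks the Harder--Narasimhan polygon of the associated graded and shows it strictly decreases; $\nabla$-semistability of $(\mathcal{V},\nabla)$ gives the lower bound that forces termination. Your intuition about the role of $\nabla$-semistability and boundedness is correct, but the combinatorial mechanism that makes the invariant drop is this shift, not refinement.
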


In general, gr-semistable filtration on a semistable flat bundle is not unique, but associated graded Higgs bundles with respect to different gr-semistable filtrations are $S$-equivalent.
\begin{lemma}[\protect{\cite[Proposition 6.9]{MR3994100},~\cite[Corollary 5.6]{MR3218782}}]\label{lemma: uniqueness of gr-semistable fil}
If $\mathcal{N}_1^{\bullet}$ and $\mathcal{N}_2^{\bullet}$ are gr-semistable filtrations of a $\nabla$-semistable flat bundle $(\mathcal{V},\nabla)$, then the reflexivizations of the associated graded slope polystable Higgs bundles obtained from the Jordan-H\"older filtrations of the associated graded semistable Higgs bundles are isomorphic. Moreover, if the associated graded Higgs bundle is stable, then the gr-semistable filtration is unique.
\end{lemma}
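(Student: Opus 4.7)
The plan is to establish the $S$-equivalence by a Zassenhaus-style common-refinement of $\mathcal{N}_1^\bullet$ and $\mathcal{N}_2^\bullet$, and then to deduce the uniqueness statement from it. I would set $\mathcal{M}^{i,j}:=\mathcal{N}_1^i\cap\mathcal{N}_2^j$ (saturated in $\mathcal{V}$) and introduce the Schreier bigraded pieces $\mathrm{gr}^{i,j}:=\mathcal{M}^{i,j}/(\mathcal{M}^{i+1,j}+\mathcal{M}^{i,j+1})$. Repeated application of the Dedekind modular law identifies the refinement of $\mathcal{N}_1^\bullet$ whose successive steps are $\mathcal{N}_1^{i+1}+\mathcal{M}^{i,j}$ (lexicographically in $(i,j)$) as a filtration of $\mathcal{V}$ with successive quotients exactly the $\mathrm{gr}^{i,j}$; a symmetric statement holds after swapping the roles of $\mathcal{N}_1$ and $\mathcal{N}_2$. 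Griffiths transversality of both filtrations, combined with $(A\cap B)\otimes\Omega_X=(A\otimes\Omega_X)\cap(B\otimes\Omega_X)$ for subsheaves of a locally free sheaf, yields the double shift $\nabla(\mathcal{M}^{i,j})\subseteq\mathcal{M}^{i-1,j-1}\otimes\Omega_X$.

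The next step is to bootstrap these sheaf-level identities into statements about Higgs bundles. Pushing $\mathcal{N}_2^\bullet$ forward to $(\mathrm{Gr}_{\mathcal{N}_1}(\mathcal{V}),\mathrm{Gr}(\nabla))$ as the Griffiths transverse filtration $\tilde{\mathcal{N}}_2^j:=\bigoplus_i(\mathcal{M}^{i,j}+\mathcal{N}_1^{i+1})/\mathcal{N}_1^{i+1}$, one observes that its associated graded is $\bigoplus_{i,j}\mathrm{gr}^{i,j}$ with the induced Higgs field $\mathrm{gr}^{i,j}\to\mathrm{gr}^{i-1,j-1}\otimes\Omega_X$. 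Since $\mathrm{Gr}_{\mathcal{N}_1}(\mathcal{V})$ is semistable of slope $\mu:=\mu(\mathcal{V})$, each (saturated) $\tilde{\mathcal{N}}_2^j$ has slope at most $\mu$; the same inequality holds from the $\mathcal{N}_2$-side, and rank/degree additivity across the bigrade forces every nonzero $\mathrm{gr}^{i,j}$ to have slope exactly $\mu$. The bigraded $\bigoplus_{i,j}\mathrm{gr}^{i,j}$ is therefore a semistable Higgs bundle of slope $\mu$, and its polystable reflexivization coincides with that of $\mathrm{Gr}_{\mathcal{N}_1}(\mathcal{V})$ (and symmetrically with that of $\mathrm{Gr}_{\mathcal{N}_2}(\mathcal{V})$) by the Seshadri-Jordan-H\"older theorem for semistable Higgs bundles applied to the semistable ``extension'' encoded by $\tilde{\mathcal{N}}_2^\bullet$. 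This yields the asserted isomorphism of polystable reflexivizations.

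For the uniqueness claim, assume $\mathrm{Gr}_{\mathcal{N}_1}(\mathcal{V})$ is stable; then its polystable reflexivization is itself, consisting of a single Jordan-H\"older factor. If $\mathcal{N}_1^\bullet\ne\mathcal{N}_2^\bullet$, the bigraded $\bigoplus_{i,j}\mathrm{gr}^{i,j}$ has at least two nonzero summands, each of slope $\mu$; by the first part, $\mathrm{Gr}_{\mathcal{N}_1}(\mathcal{V})$ would be $S$-equivalent to a Higgs bundle with at least two Jordan-H\"older factors, contradicting its stability. Hence $\mathcal{N}_1^\bullet=\mathcal{N}_2^\bullet$.

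The main technical obstacle is the mismatch between Griffiths transversality and genuine Higgs-invariance: $\tilde{\mathcal{N}}_2^\bullet$ is only Griffiths transverse on $\mathrm{Gr}_{\mathcal{N}_1}(\mathcal{V})$, so the $\mathrm{gr}^{i,j}$ are not literally successive quotients by Higgs subsheaves. Translating the Schreier equalities into equalities of polystable reflexivizations requires showing that the semistable bigraded Higgs bundle is $S$-equivalent to $\mathrm{Gr}_{\mathcal{N}_1}(\mathcal{V})$; I would handle this either by a moduli-theoretic argument (interpreting the bigraded as a $\mathbb{G}_m$-limit of $\mathrm{Gr}_{\mathcal{N}_1}(\mathcal{V})$ induced by the Rees construction of $\tilde{\mathcal{N}}_2^\bullet$, so that the $S$-equivalence class is preserved by separatedness of the moduli of polystable objects) or by an Ext-vanishing argument that produces a genuine Higgs-subsheaf filtration of $\mathrm{Gr}_{\mathcal{N}_1}(\mathcal{V})$ matching the $\mathrm{gr}^{i,j}$ up to polystable reflexivization, as in the moduli-theoretic treatment of \cite{MR3994100} and \cite{MR3218782}.
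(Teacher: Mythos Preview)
The paper does not supply its own proof of this lemma: it is quoted verbatim from \cite[Proposition 6.9]{MR3994100} and \cite[Corollary 5.6]{MR3218782}, with no argument given. So there is nothing in the paper to compare your strategy against, and the question becomes whether your outline stands on its own.

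It does not, and you have essentially identified where it breaks. The step ``since $\mathrm{Gr}_{\mathcal{N}_1}(\mathcal{V})$ is semistable of slope $\mu$, each (saturated) $\tilde{\mathcal{N}}_2^j$ has slope at most $\mu$'' is unjustified: semistability of a Higgs bundle bounds the slope of \emph{Higgs} subsheaves, but your own computation shows $\theta_1(\tilde{\mathcal{N}}_2^j)\subseteq\tilde{\mathcal{N}}_2^{j-1}\otimes\Omega_X$, so $\tilde{\mathcal{N}}_2^j$ is only Griffiths transverse, not $\theta_1$-invariant. Without this inequality you cannot force each $\mathrm{gr}^{i,j}$ to have slope $\mu$, and the whole $S$-equivalence argument collapses. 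Your proposed rescues do not close the gap either. The Rees/$\mathbb{G}_m$-degeneration of $\mathrm{Gr}_{\mathcal{N}_1}(\mathcal{V})$ along the Griffiths-transverse filtration $\tilde{\mathcal{N}}_2^\bullet$ does produce a flat family of Higgs bundles with special fibre the bigraded object, but semistability is \emph{open}, not closed, so you cannot conclude the special fibre is semistable (and you need that before you can invoke separatedness of the moduli of polystable objects). The ``Ext-vanishing'' alternative is only a name, not an argument. The actual proofs in \cite{MR3994100} and \cite{MR3218782} proceed differently, via the canonical (Simpson) filtration produced by iterating Harder--Narasimhan on the graded, and its comparison with an arbitrary gr-semistable filtration; the Zassenhaus refinement alone does not suffice.

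Your uniqueness paragraph also has a small hole. ``At least two nonzero summands $\mathrm{gr}^{i,j}$'' does not by itself give two Jordan--H\"older factors, since the induced Higgs field $\mathrm{gr}^{i,j}\to\mathrm{gr}^{i-1,j-1}\otimes\Omega_X$ can make the bigraded object indecomposable. What you need is that this Higgs field preserves the difference $i-j$, so the bigraded Higgs bundle splits as a direct sum over the diagonals $i-j=c$; then $\mathcal{N}_1^\bullet\ne\mathcal{N}_2^\bullet$ (after normalising indices) forces nonzero pieces on at least two distinct diagonals, and \emph{that} contradicts stability.
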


\begin{defn}
A gr-semistable filtration is called \emph{gr-polystable}, if the associated graded Higgs bundle is polystable.
\end{defn}
\begin{lemma}\label{lemma: polystable representative has minimal exponent}
Let $(\mathcal{V},\nabla)$ be a semistable flat bundle. If there exists a gr-polystable filtration, then the exponent of nilpotency of the associated graded Higgs bundle with respect to gr-polystable filtration is less than or equal to the exponent of nilpotency of the associated graded Higgs bundle with respect to any gr-semistable filtration.
\end{lemma}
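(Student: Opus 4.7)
The plan is to combine the uniqueness Lemma \ref{lemma: uniqueness of gr-semistable fil} with two elementary monotonicity observations about the exponent of nilpotency: (i) it does not increase under passing to the associated graded along a filtration by Higgs subsheaves, and (ii) it is invariant under reflexivization. Throughout, write $(\mathcal{E}_{\mathrm{ps}},\theta_{\mathrm{ps}})$ for the polystable Higgs graded produced by the assumed gr-polystable filtration, and $(\mathcal{E}_{\mathrm{ss}},\theta_{\mathrm{ss}})$ for the semistable Higgs graded produced by any other gr-semistable filtration of $(\mathcal{V},\nabla)$; use $\exp(\cdot)$ as shorthand for the exponent of nilpotency.

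The first step is to invoke Lemma \ref{lemma: uniqueness of gr-semistable fil}. Since any polystable Higgs bundle already coincides with its own Jordan--H\"older graded (the JH filtration may be taken to be the tautological flag of direct summands of the isotypical stable pieces), the lemma supplies an isomorphism of Higgs sheaves
$$ (\mathcal{E}_{\mathrm{ps}},\theta_{\mathrm{ps}})^{\vee\vee} \;\cong\; \gr_{\mathrm{JH}}(\mathcal{E}_{\mathrm{ss}},\theta_{\mathrm{ss}})^{\vee\vee}. $$

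Next I would verify (i). The JH filtration $\{\mathcal{E}_{\mathrm{ss}}^i\}$ consists of saturated Higgs subsheaves, so each $\theta_{\mathrm{ss}}(\partial_{x_j})$ preserves every step, and for any local sections $\partial_{x_1},\ldots,\partial_{x_\ell}$ of $\mathcal{T}_X$ the iterated composition $\theta_{\mathrm{ss}}(\partial_{x_1})\cdots\theta_{\mathrm{ss}}(\partial_{x_\ell})$ descends to each subquotient as the analogous composition of the graded Higgs field. Vanishing of the former on $(\mathcal{E}_{\mathrm{ss}},\theta_{\mathrm{ss}})$ therefore forces vanishing of the latter on $\gr_{\mathrm{JH}}(\mathcal{E}_{\mathrm{ss}},\theta_{\mathrm{ss}})$, giving $\exp(\gr_{\mathrm{JH}}(\mathcal{E}_{\mathrm{ss}},\theta_{\mathrm{ss}})) \leq \exp(\mathcal{E}_{\mathrm{ss}},\theta_{\mathrm{ss}})$. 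For (ii), the reflexivization $(\mathcal{F}^{\vee\vee},\eta^{\vee\vee})$ of a Higgs sheaf $(\mathcal{F},\eta)$ extends it via an inclusion whose cokernel is supported in codimension $\geq 2$, while the target $\mathcal{F}^{\vee\vee}\otimes \sym^\ell \Omega^1_X$ of $(\eta^{\vee\vee})^\ell$ is torsion-free (tensor of a reflexive sheaf with a locally free one). Any morphism from a torsion sheaf to a torsion-free sheaf is zero, so $\eta^\ell = 0$ if and only if $(\eta^{\vee\vee})^\ell = 0$; in particular reflexivization preserves the exponent.

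Putting these together,
\begin{align*}
\exp(\mathcal{E}_{\mathrm{ps}},\theta_{\mathrm{ps}})
  &= \exp\bigl((\mathcal{E}_{\mathrm{ps}},\theta_{\mathrm{ps}})^{\vee\vee}\bigr)
   = \exp\bigl(\gr_{\mathrm{JH}}(\mathcal{E}_{\mathrm{ss}},\theta_{\mathrm{ss}})^{\vee\vee}\bigr) \\
  &= \exp\bigl(\gr_{\mathrm{JH}}(\mathcal{E}_{\mathrm{ss}},\theta_{\mathrm{ss}})\bigr)
   \;\leq\; \exp(\mathcal{E}_{\mathrm{ss}},\theta_{\mathrm{ss}}),
\end{align*}
where the first and third equalities use (ii), the middle equality is the isomorphism from Lemma \ref{lemma: uniqueness of gr-semistable fil}, and the final inequality is (i). The only conceptual step is the first one---recognizing that the gr-polystable case's JH graded is the polystable Higgs bundle itself, so that Lemma \ref{lemma: uniqueness of gr-semistable fil} directly produces the desired isomorphism. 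The remaining ingredients are standard torsion-free-sheaf bookkeeping, and in the curve case of primary interest the reflexivizations are vacuous since a torsion-free coherent sheaf on a smooth curve is automatically locally free.
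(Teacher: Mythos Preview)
Your proof is correct and follows essentially the same route as the paper: invoke Lemma \ref{lemma: uniqueness of gr-semistable fil} to identify the reflexive hull of the polystable graded with that of the Jordan--H\"older graded of any gr-semistable graded, then use that passing to JH subquotients does not raise the exponent and that reflexivization preserves it. You give slightly more detail on the two monotonicity/invariance observations than the paper does, but the argument is the same.
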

\begin{proof}
Let $(\widetilde{\mathcal{E}},\widetilde{\theta})$ be the associated graded Higgs bundle with respect to a gr-polystable filtration and $(\mathcal{E},\theta)$ be the associated graded Higgs bundle with respect to any gr-semistable filtration. Let 
$$
0=(\mathcal{E}_0,\theta_0)\subset(\mathcal{E}_1,\theta_1)\subset\dots\subset(\mathcal{E}_m,\theta_m)=(\mathcal{E},\theta)
$$
be a Jordan-H\"older filtration of $(\mathcal{E},\theta)$ and let $(\overline{\mathcal{E}}_i,\overline{\theta}_i)=(\mathcal{E}_i,\theta_i)/(\mathcal{E}_{i-1},\theta_{i-1})$ for $1\leq i \leq m$. Then the maximum among the exponents of nilpotency $(\overline{\mathcal{E}}_i,\overline{\theta}_i)$ is less than for equal to the exponent of nilpotency of $(\mathcal{E},\theta)$. 

Moreover, $(\widetilde{\mathcal{E}},\widetilde{\theta})^{**}\cong\bigoplus_{i=1}^m(\overline{\mathcal{E}}_i,\overline{\theta}_i)^{**}$ (where $**$ denotes the reflexivization) by Lemma \ref{lemma: uniqueness of gr-semistable fil}, and this completes the proof because the reflexivization does not change the exponent of nilpotency.
\end{proof}

One is tempted to ask whether one can always find a gr-polystable filtration. Over $\mathbb{P}^1$, any gr-semistable filtration is automatically gr-polystable, since every semistable Higgs bundle on $\mathbb{P}^1$ is polystable. However, this is not always possible when genus $\ge1$. We include the next two examples for their independent interest.
 
\begin{ex}
Let $E$ be an elliptic curve over $k$ with $\mathrm{char}(k)=2$. Let $\mathcal{E}$ be the non-trivial extension of structure sheaf $\mathcal{O}_E$ by a degree one line bundle. Then $\mathcal{E}$ is stable while $F^*\mathcal{E}$ is semistable, as every semistable bundle on elliptic curve is strongly semistable. However, $F^*\mathcal{E}$ is not stable,
since there is no stable bundle of rank $2$ and degree $2$ on elliptic curve.

Suppose a Jordan-H\"older filtration of $F^*\mathcal{E}$ is given by 
$$
0=\mathcal{E}_2\subset\mathcal{E}_1\subset\mathcal{E}_0=F^*\mathcal{E}.
$$
The only possible gr-semistable filtrations of $(F^*\mathcal{E},\nabla_{\can})$ are given by its Jordan-H\"older filtration or the trivial filtration.
We know that $F^*\mathcal{E}$ is indecomposable by \cite[Theorem 2.16]{MR318151} and that $\mathcal{E}_1$ is not $\nabla_{\can}$-invariant. This shows that in any case the associated graded Higgs bundle is not polystable.
\end{ex}
\begin{ex}
Let $C$ be a genus $g\geq 2$ curve and $\mathcal{V}$ be a non-trivial extension of $\mathcal{O}_C$ by $\mathcal{O}_C$. Since $\mathcal{V}$ is an indecomposable bundle of degree zero, there exists a connection $\nabla$ on it by \cite[Proposition 3.1]{MR2237516}. Without loss of generality, we may assume that $\nabla$ does not preserve the subbundle $\mathcal{O}_C$. 

Indeed, consider the sheaf $\mathcal{A}$ of endomorphisms $\varphi\colon\mathcal{V}\to\mathcal{V}\otimes K_C$ which maps the subbundle $\mathcal{O}_C$ into $\mathcal{O}_C\otimes K_C$. Then $\varphi\mapsto\varphi|_{\mathcal{O}_C}$ gives a map $\mathcal{A}\to K_C$ and its kernel is $\mathcal{V}\otimes K_C$. Hence
$$
\dim H^0(C,\mathcal{A})\leq \dim H^0(C,K_C)+\dim H^0(C,\mathcal{V}\otimes K_C).
$$
Since $\mathcal{V}$ is a non-trivial extension of $\mathcal{O}_C$ by $\mathcal{O}_C$, $\dim H^0(C,\mathcal{V}\otimes K_C)=2g-1$ and so $\dim H^0(C,\mathcal{A})\leq 3g-1$. 

On the other hand, since $\mathcal{V}$ is a non-trivial extension of $\mathcal{O}_C$ by $\mathcal{O}_C$, we see that
$\dim H^0(C, \sheafend(\mathcal{V})) = 2$, and this fact, Serre duality and Riemann-Roch imply
that $\dim H^0(\sheafend(\mathcal{V})\otimes K_C ) = 4g-2 > 3g-1$. So we may take an element $\xi\in H^0(C,\sheafend\,(\mathcal{V})\otimes K_C)$ which does \emph{not} preserve the subbundle $\mathcal{O}_C$. Then $\nabla$ or $\nabla+\xi$ is such a connection as requested. 

Claim that there is no gr-polystable filtration on $\mathcal{V}$. Indeed, note that
$$
0\subset\mathcal{O}_C\subset\mathcal{V}
$$
is a gr-semistable filtration on $\mathcal{V}$, and the associated graded Higgs bundle is $(\mathcal{O}_C\oplus\mathcal{O}_C,\theta)$, where $\theta\colon\mathcal{O}_C\to\mathcal{O}_C\otimes K_C$ is a non-zero Higgs field. It is not polystable for the polystable representative in the $S$-equivalent class is $(\mathcal{O}_C\oplus\mathcal{O}_C,0)$. The other possible gr-semistable filtration on $\mathcal{V}$ is the trivial filtration. Then its associated graded Higgs bundle is simply $(\mathcal{V},0)$, which is neither polystable, as $\mathcal{V} $ is a non-trivial extension. The claim is proved.
\end{ex}

\section{Lan-Sheng-Zuo conjecture on elliptic curves}
\begin{prop}\label{prop: the Lan-Sheng-Zuo conjecture on elliptic curve}
Let $E$ be an elliptic curve over $k$. Then every semistable Higgs bundle $(\mathcal{E},\theta)$ which is nilpotent of exponent $\leq p-1$ is strongly semistable.
\end{prop}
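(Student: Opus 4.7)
The plan is to exhibit an explicit semistable Higgs-de Rham flow for $(\mathcal{E},\theta)$ whose $i$-th Higgs term (for $i\ge 1$) is $((F^i)^*\mathcal{E},0)$; at every stage I will use the trivial Griffiths-transverse filtration to pass from the flat bundle to the next Higgs bundle, and Frobenius preservation of semistability on elliptic curves will keep the whole flow semistable.

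The key preparatory step is to reduce to vector-bundle semistability of $\mathcal{E}$. Since $K_E\cong\mathcal{O}_E$, the Higgs field is essentially an endomorphism $\theta\colon\mathcal{E}\to\mathcal{E}$. I argue by contradiction: if $\mathcal{E}$ is not semistable as a vector bundle, let $\mathcal{F}\subsetneq\mathcal{E}$ be the maximal destabilizing subsheaf, so that $\mu(\mathcal{F})=\mu_{\max}(\mathcal{E})>\mu_{\max}(\mathcal{E}/\mathcal{F})$. Because $\deg K_E=0$, the standard slope comparison forces the composite
$$
\mathcal{F}\hookrightarrow\mathcal{E}\xrightarrow{\theta}\mathcal{E}\otimes K_E\twoheadrightarrow(\mathcal{E}/\mathcal{F})\otimes K_E
$$
to vanish; hence $\mathcal{F}$ is $\theta$-invariant, giving a Higgs subsheaf of slope strictly greater than $\mu(\mathcal{E})$, contradicting the Higgs semistability of $(\mathcal{E},\theta)$. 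I then invoke the classical fact that on an elliptic curve Frobenius pullback preserves vector-bundle semistability (for instance because $F\colon E\to E^{(p)}$ is an isogeny of abelian varieties), so that $(F^n)^*\mathcal{E}$ is vector-bundle semistable for every $n\ge 0$, and consequently $((F^n)^*\mathcal{E},0)\in\HIG_{p-1}(E)$ is Higgs semistable of exponent $0$.

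Finally, I assemble the flow. By the Ogus--Vologodsky description recalled in the preliminaries, the underlying bundle of $C^{-1}(\mathcal{E},\theta)$ is $F^*\mathcal{E}$; I take $\mathcal{N}_0^{\bullet}$ to be the trivial filtration on this flat bundle, whose associated graded Higgs bundle is $(F^*\mathcal{E},0)$, nilpotent of exponent $0\le p-1$ and Higgs semistable by the previous paragraph, hence a legitimate next term $(\mathcal{E}_1,\theta_1)$. Iterating, for $i\ge 1$ we have $C^{-1}((F^i)^*\mathcal{E},0)=((F^{i+1})^*\mathcal{E},\nabla_{\mathrm{can}})$, and the trivial filtration once more yields $((F^{i+1})^*\mathcal{E},0)$, which is Higgs semistable. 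Every term of the resulting Higgs-de Rham flow is semistable, exhibiting $(\mathcal{E},\theta)$ as strongly semistable. The only substantive ingredient is the first reduction to vector-bundle semistability; once that is in place, the construction is essentially tautological, precisely because the examples discussed earlier (where no gr-polystable filtration exists on $F^*\mathcal{E}$) are sidestepped by choosing the trivial filtration rather than a Jordan--H\"older-type one.
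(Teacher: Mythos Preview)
Your overall strategy---take the trivial filtration at each step and reduce to strong semistability of vector bundles on $E$---is exactly the paper's, and your argument that the maximal destabilizer is automatically preserved because $\deg K_E=0$ is the right mechanism. The gap is in the sentence ``the underlying bundle of $C^{-1}(\mathcal{E},\theta)$ is $F^*\mathcal{E}$.'' The remark you cite from the preliminaries says this only when $\theta=0$; for a non-trivial Higgs field the inverse Cartier transform is an \emph{exponential twisting} of $F^*\mathcal{E}$, meaning the local pieces $F^*\mathcal{E}|_{U_i}$ are reglued by unipotent automorphisms $\exp(h_{ij}\cdot F^*\theta)$ coming from the discrepancy of local Frobenius lifts. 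The resulting bundle $\mathcal{V}$ need not be isomorphic to $F^*\mathcal{E}$, so knowing that $F^*\mathcal{E}$ is semistable does not directly tell you that $\mathcal{V}$ is, and hence does not justify that the trivial filtration on $(\mathcal{V},\nabla)$ is gr-semistable.

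The fix is minimal and uses the very argument you already wrote down, just applied to the correct object. Set $(\mathcal{V},\nabla)=C^{-1}(\mathcal{E},\theta)$; since $C^{-1}$ takes semistable Higgs bundles to $\nabla$-semistable flat bundles, $(\mathcal{V},\nabla)$ is $\nabla$-semistable. Now run your ``maximal destabilizer is invariant because $\deg K_E=0$'' argument with $\nabla$ in place of $\theta$ (the connection has order one, so the induced map $\mathcal{F}\to(\mathcal{V}/\mathcal{F})\otimes K_E$ is still $\mathcal{O}_E$-linear): this forces $\mathcal{V}$ itself to be semistable. From there your flow proceeds exactly as you wrote, with $(\mathcal{V},0)$ in place of $(F^*\mathcal{E},0)$ as the first graded term and $(F^n)^*\mathcal{V}$ thereafter. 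This is precisely how the paper argues; note in particular that the paper never asserts $\mathcal{V}\cong F^*\mathcal{E}$.
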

\begin{proof}
Let us denote $(\mathcal{V},\nabla)=C^{-1}(\mathcal{E},\theta)$. Then $(\mathcal{V},\nabla)$ is a $\nabla$-semistable flat bundle since $(\mathcal{E},\theta)$ is a semistable Higgs bundle. Now we claim that the underlying bundle of a $\nabla$-semistable flat bundle is semistable. Assume the contrary. Let
$$
0=\mathcal{V}^{s+1}\subsetneq\mathcal{V}^{s}\subsetneq\dots\subsetneq\mathcal{V}^0=\mathcal{V}
$$
be the Harder-Narasimhan filtration of $\mathcal{V}$. Then $\mathcal{V}^{s}$ is not $\nabla$-invariant since $(\mathcal{V},\nabla)$ is $\nabla$-semistable. Then there is a unique $\ell<s$ such that $\nabla(\mathcal{V}^{s})\not\subseteq\mathcal{V}^{\ell+1}$ but $\nabla(\mathcal{V}^s)\subseteq\mathcal{V}^{\ell}$. Then $\nabla$ induces a non-zero morphism $\varphi\colon\mathcal{V}^s\to\mathcal{G}^{\ell}:=\mathcal{V}^{\ell}/\mathcal{V}^{\ell+1}$, which is a contradiction, as $\mu(\mathcal{V}^s)>\mu(\mathcal{G}^{\ell})$.

Since $\mathcal{V}$ is a semistable bundle, we may choose the trivial filtration on $(\mathcal{V},\nabla)$ as its gr-semistable filtration, and the associated graded Higgs bundle is $(\mathcal{V},0)$. If $\mathcal{V}$ is a semistable bundle, then $(F^*\mathcal{V},\nabla_{\can})$ is $\nabla$-semistable, and by above claim we have $F^*\mathcal{V}$ is a semistable bundle. This shows every semistable bundle on elliptic curve is strongly semistable.

As a consequence, every semistable Higgs bundle $(\mathcal{E},\theta)$ which is nilpotent of exponent $\leq p-1$  initializes a semistable Higgs-de Rham flow as follows
\begin{center}
\begin{tikzcd}
                                       & {(\mathcal{V},\nabla)} \arrow[rd, "\mathrm{Gr}_{\mathcal{N}_0}"] &                                        & {(F^*\mathcal{V},\nabla_{\can})} \arrow[rd, "\mathrm{Gr}_{\mathcal{N}_1}"] &          \\
{(\mathcal{E},\theta)} \arrow[ru, "C^{-1}"] &                                                         & {(\mathcal{V},0)} \arrow[ru, "C^{-1}"] &                                                                   & {\dots,}
\end{tikzcd}
\end{center}
where $\mathcal{N}_{i}^{\bullet}$ is the trivial filtration for each $i$, and thus it is strongly semistable.
\end{proof}
\begin{remk}\label{remk: LSZ on higher dimensional cases}
The proof of Proposition \ref{prop: the Lan-Sheng-Zuo conjecture on elliptic curve} can be generalized to prove a higher dimensional statement: Let $X$ be a smooth projective variety over $k$ such that $\mu_{\omega,\max}(\Omega_X^1)\leq0$. Then every $\omega$-semistable Higgs bundle which is nilpotent of exponent $\leq p-1$ is strongly $\omega$-semistable.
\end{remk}
\section{Semistable but not strongly semistable Higgs bundles}

We first provide a construction of semistable but not strongly semistable Higgs bundle of any big rank, thanks to the following results of X. Sun. 

\begin{thm}[\protect{\cite[Theorem 2.2]{MR2415312}}]\label{thm: stable pushforward under Frobenius is stable}
Let $C$ be a curve of genus $g\ge1$. Then $F_*\mathcal{E}$ is semistable whenever $\mathcal{E}$ is semistable. If $g\ge2$, then $F_*\mathcal{E}$ is stable whenever $\mathcal{E}$ is stable.
\end{thm}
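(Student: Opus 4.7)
The plan is to exploit the canonical filtration on $F^*F_*\mathcal{E}$ induced by the canonical connection $\nabla_{\can}$. The first step is to construct a decreasing filtration
$$0 = V_p \subset V_{p-1} \subset \dots \subset V_0 = F^*F_*\mathcal{E}$$
with $V_0/V_1 \cong \mathcal{E}$ (via the adjunction counit $F^*F_*\mathcal{E} \to \mathcal{E}$) such that $\nabla_{\can}(V_i) \subseteq V_{i-1} \otimes \Omega_C$ and the induced $\mathcal{O}_C$-linear map $V_i/V_{i+1} \to (V_{i-1}/V_i) \otimes \Omega_C$ is an isomorphism for each $1 \leq i \leq p-1$. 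Iterating this identifies $V_i/V_{i+1} \cong \mathcal{E} \otimes \Omega_C^{\otimes i}$. The construction proceeds locally, using a local parameter $t$ on $C$ together with the identification $F_*\mathcal{O}_C \cong \bigoplus_{j=0}^{p-1} \mathcal{O}_C \cdot t^j$, and then verifying the asserted properties of $\nabla_{\can}$ on graded pieces.

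Next, given a saturated subsheaf $\mathcal{F} \subset F_*\mathcal{E}$ of rank $r$, I pull back to $\mathcal{W} := F^*\mathcal{F} \subseteq F^*F_*\mathcal{E}$, which is automatically $\nabla_{\can}$-invariant. Intersecting with the canonical filtration produces $\mathcal{W}_i := \mathcal{W} \cap V_i$. Because the graded connection maps on the ambient $V_\bullet$ are isomorphisms and $\nabla_{\can}(\mathcal{W}) \subseteq \mathcal{W} \otimes \Omega_C$, the graded pieces admit injections
$$\mathcal{W}_i/\mathcal{W}_{i+1} \hookrightarrow (\mathcal{W}_{i-1}/\mathcal{W}_i) \otimes \Omega_C.$$
Setting $r_i := \rk(\mathcal{W}_i/\mathcal{W}_{i+1})$, this forces $r_0 \geq r_1 \geq \dots \geq r_{p-1} \geq 0$ with $\sum_i r_i = pr$. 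Semistability of $\mathcal{E}$ (hence of $\mathcal{E} \otimes \Omega_C^{\otimes i}$) gives $\mu(\mathcal{W}_i/\mathcal{W}_{i+1}) \leq \mu(\mathcal{E}) + i(2g-2)$. Averaging with weights $r_i$ and invoking the elementary inequality $\sum_i i r_i / \sum_i r_i \leq (p-1)/2$, which is a direct consequence of the non-increasing condition and saturates only when all $r_i$ are equal, I obtain
$$p\mu(\mathcal{F}) = \mu(\mathcal{W}) \leq \mu(\mathcal{E}) + (g-1)(p-1) = p\mu(F_*\mathcal{E}),$$
which proves semistability.

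For the stability statement when $g \geq 2$ and $\mathcal{E}$ is stable, the equality case $\mu(\mathcal{F}) = \mu(F_*\mathcal{E})$ with $\mathcal{F}$ a proper saturated subsheaf would force both inequalities above to be equalities: the rank bound saturates, so $r_0 = r_1 = \dots = r_{p-1}$, and each $\mathcal{W}_i/\mathcal{W}_{i+1}$ attains the maximal slope inside $\mathcal{E} \otimes \Omega_C^{\otimes i}$. Stability of $\mathcal{E}$ then forces each such graded piece to saturate to all of $\mathcal{E} \otimes \Omega_C^{\otimes i}$, and an upward induction beginning with $\mathcal{W}_{p-1} \subseteq V_{p-1}$ yields $\mathcal{W} = F^*F_*\mathcal{E}$, hence $\mathcal{F} = F_*\mathcal{E}$ — contradicting properness. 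The main obstacle is the first step: constructing the canonical filtration and verifying that the connection induces isomorphisms on graded pieces requires careful local computation; once that is in hand, the remainder is straightforward slope bookkeeping.
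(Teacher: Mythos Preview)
The paper does not give its own proof of this theorem; it is quoted from Sun \cite{MR2415312} and used as a black box. Your outline is precisely Sun's original argument via the canonical filtration (which the paper records separately as Lemma~\ref{lemma: Canonical filtration on curves}), and it is correct.

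Two minor remarks. First, you wrote $\sum_i r_i = pr$, but in fact $\sum_i r_i=\rk(\mathcal W)=\rk(F^*\mathcal F)=\rk(\mathcal F)=r$; Frobenius pullback does not multiply the rank. This slip is harmless, since your final chain $p\mu(\mathcal F)=\mu(\mathcal W)\le \mu(\mathcal E)+(g-1)(p-1)=p\mu(F_*\mathcal E)$ is stated correctly. Second, in the stability step the ``upward induction'' is not needed: once all $r_i$ are equal to some $r'>0$ and each $\mathcal W_i/\mathcal W_{i+1}$ attains the slope $\mu(\mathcal E)+i(2g-2)$ inside the stable bundle $\mathcal E\otimes\Omega_C^{\otimes i}$, stability immediately forces $r'=\rk(\mathcal E)$, hence $r=p\,\rk(\mathcal E)=\rk(F_*\mathcal E)$, contradicting properness. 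Your induction is not wrong, just redundant.
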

\begin{thm}[\protect{\cite[Corollary 2.4]{MR2673745}}]\label{thm: stronger inequality for slope bound}
Let $C$ be a smooth projective curve of genus $g\ge2$ and $\mathcal{L}$ be a line bundle on $C$. Then, for all proper subbundles $\mathcal{E}\subseteq F_*\mathcal{L}$, we have
$$
\mu(\mathcal{E})-\mu(F_*\mathcal{L})\leq-\frac{p-\mathrm{rk}(\mathcal{E})}{p}(g-1).
$$
\end{thm}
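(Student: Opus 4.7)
The plan is to study $F^*\mathcal{E}$ as a subbundle of $F^*F_*\mathcal{L}$, exploiting the canonical filtration of the latter. Recall that $F^*F_*\mathcal{L}$ carries a canonical connection $\nabla_{\can}$ together with a canonical filtration
$$
0 = V_p \subset V_{p-1} \subset \cdots \subset V_1 \subset V_0 = F^*F_*\mathcal{L}
$$
whose graded pieces are $V_i/V_{i+1} \cong \mathcal{L} \otimes \Omega_C^{\otimes i}$ for $0 \leq i \leq p-1$. The filtration is built out of $\nabla_{\can}$ in such a way that the induced $\mathcal{O}_C$-linear map $\overline{\nabla}\colon V_i/V_{i+1} \to (V_{i-1}/V_i) \otimes \Omega_C$ is an isomorphism for every $i \geq 1$.

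Given a proper subbundle $\mathcal{E} \subseteq F_*\mathcal{L}$ of rank $r$, I would form the Frobenius pullback $F^*\mathcal{E} \subseteq F^*F_*\mathcal{L}$, which has rank $r$ and is $\nabla_{\can}$-invariant (as $\mathcal{E}$ sits inside its horizontal sections). Intersecting with the canonical filtration gives $(F^*\mathcal{E})_i := F^*\mathcal{E} \cap V_i$, so that each graded piece $(F^*\mathcal{E})_i/(F^*\mathcal{E})_{i+1}$ injects into $\mathcal{L} \otimes \Omega_C^{\otimes i}$ and hence has rank at most $1$. Letting $S$ denote the set of indices $i$ for which this graded piece is nonzero, we have $|S| = r$.

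The crucial step is to verify that $S = \{0, 1, \ldots, r-1\}$. Because $F^*\mathcal{E}$ is preserved by $\nabla_{\can}$, the ambient isomorphism $\overline{\nabla}$ restricts to an injection $(F^*\mathcal{E})_i/(F^*\mathcal{E})_{i+1} \hookrightarrow ((F^*\mathcal{E})_{i-1}/(F^*\mathcal{E})_i) \otimes \Omega_C$ for every $i \geq 1$, so a nonzero graded piece at position $i \geq 1$ forces a nonzero graded piece at $i-1$. This downward-closure together with $|S| = r$ pins $S$ down.

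The conclusion is a degree computation. Each nonzero graded piece, being a rank-one subsheaf of $\mathcal{L} \otimes \Omega_C^{\otimes i}$, has degree at most $\deg\mathcal{L} + i(2g-2)$, hence
$$
\deg(F^*\mathcal{E}) \leq \sum_{i=0}^{r-1}\bigl(\deg\mathcal{L} + i(2g-2)\bigr) = r\deg\mathcal{L} + (g-1)r(r-1).
$$
Dividing by $pr$ and combining with $\deg(F^*\mathcal{E}) = p\deg\mathcal{E}$ and the standard identity $\deg(F_*\mathcal{L}) = \deg\mathcal{L} + (g-1)(p-1)$, a short arithmetic manipulation yields $\mu(\mathcal{E}) - \mu(F_*\mathcal{L}) \leq -\frac{(p-r)(g-1)}{p}$. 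The main obstacle is the structural input: producing the canonical filtration and verifying that $\overline{\nabla}$ induces an isomorphism between consecutive graded pieces. Everything after that reduces to formal bookkeeping with ranks and degrees.
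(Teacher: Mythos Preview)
The paper does not supply its own proof of this statement: it is quoted verbatim as \cite[Corollary 2.4]{MR2673745} and used as a black box. Your argument is correct and is essentially Sun's original proof---pull back by Frobenius, intersect with the canonical filtration of $F^*F_*\mathcal{L}$ (whose existence and properties the present paper records as Lemma~\ref{lemma: Canonical filtration on curves}), use $\nabla_{\can}$-invariance of $F^*\mathcal{E}$ together with the graded isomorphisms to force the nonzero graded pieces to occupy the bottom $r$ slots, and then bound degrees termwise. The one point worth making explicit in your write-up is the commutative-square reasoning behind ``$\overline{\nabla}$ restricts to an injection'': you need both that $\nabla_{\can}\bigl((F^*\mathcal{E})_i\bigr)\subseteq (F^*\mathcal{E})_{i-1}\otimes K_C$ (from $\nabla_{\can}$-invariance of $F^*\mathcal{E}$ and Griffiths transversality of the canonical filtration) and that $(F^*\mathcal{E})_{i-1}/(F^*\mathcal{E})_i\hookrightarrow V_{i-1}/V_i$, so that injectivity of the ambient graded map forces injectivity of the restricted one.
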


\begin{defn}
Let $\mathcal{E}$ be a vector bundle on $C$. Then the \emph{canonical filtration} on $\mathcal{V}=F^*F_*\mathcal{E}$ is a decreasing filtration
$$
\mathcal{V}^{p}\subset\mathcal{V}^{p-1}\subset\dots\subset\mathcal{V}^0=\mathcal{V},
$$
where $\mathcal{V}^1=\ker\{\mathcal{V}=F^*F_*\mathcal{E}\twoheadrightarrow\mathcal{E}\}$ and
$$
\mathcal{V}^{\ell+1}=\ker\{\mathcal{V}^{\ell}\stackrel{\nabla_{\can}}{\longrightarrow}\mathcal{V}\otimes K_C\to\mathcal{V}/\mathcal{V}^{\ell}\otimes K_C\}.
$$
\end{defn}
\begin{lemma}[\protect{\cite[Theorem 5.3]{MR2231194}}]\label{lemma: Canonical filtration on curves}
Let $C$ be a curve of genus $g$ and $\mathcal{E}$ be a vector bundle on $C$. Then the following properties hold for the canonical filtration of $F^*F_*\mathcal{E}$:
\begin{enumerate}
\item[(1)] $\mathcal{V}^0/\mathcal{V}^1\cong\mathcal{E},\nabla_{\can}(\mathcal{V}^{\ell+1})\subseteq\mathcal{V}^{\ell}\otimes K_C$ for $\ell\ge1$.
\item[(2)] $\mathcal{V}^{\ell}/\mathcal{V}^{\ell+1}\stackrel{\nabla_{\can}}{\longrightarrow}(\mathcal{V}^{\ell-1}/\mathcal{V}^{\ell})\otimes K_C$ is an isomorphism for $1\leq\ell\leq p-1$ and $\mathcal{V}^p=0$.
\item[(3)] If $g\ge2$ and $\mathcal{E}$ is semistable, then the canonical filtration is nothing but the Harder-Narasimhan filtration of $F^*F_*\mathcal{E}$.
\end{enumerate}
\end{lemma}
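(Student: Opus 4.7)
The plan is to establish the three statements in order, working locally on an affine open $U \subseteq C$ with a uniformizer $t$ and a local trivialization $e_1, \ldots, e_r$ of $\mathcal{E}$.

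Statement (1) is essentially built into the definition: the isomorphism $\mathcal{V}^0/\mathcal{V}^1 \cong \mathcal{E}$ is the adjunction counit $F^*F_*\mathcal{E} \twoheadrightarrow \mathcal{E}$, and $\nabla_{\can}(\mathcal{V}^{\ell+1}) \subseteq \mathcal{V}^\ell \otimes K_C$ is the defining property of $\mathcal{V}^{\ell+1}$ as the kernel of the composition $\mathcal{V}^\ell \xrightarrow{\nabla_{\can}} \mathcal{V}\otimes K_C \twoheadrightarrow \mathcal{V}/\mathcal{V}^\ell \otimes K_C$. No real work is needed.

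For (2), I would exploit the explicit local presentation of $F^*F_*\mathcal{O}_C$ as the algebra $\mathcal{O}_U \otimes_{F^*\mathcal{O}_U} \mathcal{O}_U$, which is $\mathcal{O}_U$-free of rank $p$ with basis $v_j := 1 \otimes t^j$ ($0 \le j \le p-1$), all horizontal for $\nabla_{\can}$. The key element is $\tau := 1 \otimes t - t \otimes 1$. By the freshman's identity in characteristic $p$, $\tau^p = 1 \otimes t^p - t^p \otimes 1 = 0$; and since $\nabla_{\can}$ is a derivation on the $\mathcal{O}_C$-algebra $F^*F_*\mathcal{O}_C$ with $\nabla_{\can}(\tau) = -v_0 \otimes dt$, one obtains $\nabla_{\can}(\tau^j) = -j\, \tau^{j-1} \otimes dt$. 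A routine induction then shows that $\mathcal{V}^\ell|_U$ is freely generated over $\mathcal{O}_U$ by $\{\tau^j \cdot e_i : \ell \le j \le p-1,\ 1 \le i \le r\}$, with $\mathcal{V}^p = 0$ because $\tau^p = 0$. The induced map on the graded piece sends $\tau^\ell \cdot e_i \mapsto -\ell\, \tau^{\ell-1} \cdot e_i \otimes dt$, giving an isomorphism $\mathcal{V}^\ell/\mathcal{V}^{\ell+1} \xrightarrow{\sim} (\mathcal{V}^{\ell-1}/\mathcal{V}^\ell) \otimes K_C$ precisely for $1 \le \ell \le p-1$, because $\ell$ is then a unit in $k$.

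For (3), iterating the isomorphism from (2) yields $\mathcal{V}^\ell/\mathcal{V}^{\ell+1} \cong \mathcal{E} \otimes K_C^{\ell}$, so $\mu(\mathcal{V}^\ell/\mathcal{V}^{\ell+1}) = \mu(\mathcal{E}) + \ell(2g-2)$. When $\mathcal{E}$ is semistable each graded piece is semistable (tensoring with a line bundle preserves semistability), and when $g \ge 2$ these slopes strictly decrease from the top subquotient $\mathcal{V}^{p-1}$ down to $\mathcal{V}^0/\mathcal{V}^1 \cong \mathcal{E}$. By the uniqueness characterization of the Harder-Narasimhan filtration (semistable graded pieces, strictly decreasing slopes), the canonical filtration must coincide with the Harder-Narasimhan filtration. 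The main delicate point I anticipate is the local-to-global passage in (2): although $\mathcal{V}^\ell$ is defined intrinsically via $\nabla_{\can}$, the auxiliary element $\tau$ depends on the uniformizer, and one must check that it transforms coherently under changes of coordinates so that the local description truly computes the intrinsic filtration. This is a bookkeeping issue, but verifying it cleanly is the only non-formal step in the argument.
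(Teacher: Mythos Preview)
The paper does not give its own proof of this lemma; it is quoted verbatim from Sun's paper \cite{MR2231194} and used as a black box. So there is nothing in the present paper to compare your argument against.

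That said, your sketch is correct and is essentially the standard proof (indeed, it is the argument Sun gives). A small comment on the point you flag as delicate: the local-to-global passage in (2) is in fact a non-issue. Each $\mathcal{V}^{\ell}$ is \emph{defined} globally and intrinsically as an iterated kernel built from $\nabla_{\can}$; your local computation with $\tau$ does not define anything, it merely identifies this already-existing global subsheaf on a chart. The element $\tau$ depends on the chosen uniformizer, but you only use it to read off the local rank of $\mathcal{V}^{\ell}$ and the local matrix of the induced map on graded pieces, and both of those are intrinsic quantities. So no coherence check is required, and the argument goes through without further work.
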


From now on till end, we assume $g\ge2$. Let $\mathcal{L}$ be a line bundle on $C$ and $\mathcal{E}'=F_*\mathcal{L}$. By Theorem \ref{thm: stable pushforward under Frobenius is stable} and Riemann-Roch theorem, $\mathcal{E}'$ is a stable rank $p$ bundle of degree $\deg\mathcal{L}+(p-1)(g-1)$. Moreover, by Lemma \ref{lemma: Canonical filtration on curves} the canonical filtration of $F^*\mathcal{E}'$ is a Griffiths transverse saturated filtration, and by Lemma \ref{lemma: uniqueness of gr-semistable fil} it is the unique gr-semistable filtration since the associated graded Higgs bundle with respect to the canonical filtration is 
$$
(K_C^{p-1}\oplus K_C^{p-2}\oplus\dots\oplus K_C\oplus\mathcal{O}_C,\bigoplus_{i=1}^{p-1}\theta_i)\otimes(\mathcal{L},0),
$$
which is a stable Higgs bundle by Lemma \ref{lemma: stability of Higgs bundle} below, where $\theta_i\colon K_C^{ i}\to K_C^{ i-1}\otimes K_C$ is an isomorphism for each $i=1,\dots,p-1$.
\begin{lemma}\label{lemma: stability of Higgs bundle}
For each $m\in\Z_{>0}$, the Higgs bundle 
$$
(\mathcal{F}^m,\theta^m)=(K_C^{m}\oplus K_C^{m-1}\oplus\dots\oplus K_C\oplus\mathcal{O}_C,\bigoplus_{i=1}^{m}\theta_i)
$$
is stable, where $\theta_i$ is an isomorphism for each $i=1,\dots,m$.
\end{lemma}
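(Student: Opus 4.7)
The plan is to verify stability directly by bounding the slope of every nonzero proper Higgs subsheaf. First observe that the slope of $(\mathcal{F}^m,\theta^m)$ is
$$
\mu(\mathcal{F}^m)=\frac{(2g-2)(0+1+\dots+m)}{m+1}=(g-1)m.
$$
The key structural input is the natural grading: set $\mathcal{F}_{\le k}:=\bigoplus_{i\le k}K_C^i$. Since each $\theta_i$ maps the $i$th summand to the $(i-1)$th tensored with $K_C$, we have $\theta^m(\mathcal{F}_{\le k})\subset\mathcal{F}_{\le k-1}\otimes K_C$.

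Let $\mathcal{G}\subset\mathcal{F}^m$ be a nonzero proper Higgs subsheaf of rank $r$, so $1\le r\le m$. Intersecting yields a filtration $\mathcal{G}_{\le k}:=\mathcal{G}\cap\mathcal{F}_{\le k}$ for which $\theta^m(\mathcal{G}_{\le k})\subset\mathcal{G}_{\le k-1}\otimes K_C$, and hence an induced map $\bar\theta_k\colon\mathrm{gr}_k\mathcal{G}\to\mathrm{gr}_{k-1}\mathcal{G}\otimes K_C$. Each $\mathrm{gr}_k\mathcal{G}$ embeds into the line bundle $K_C^k$, and $\bar\theta_k$ is the restriction of the \emph{isomorphism} $\theta_k\colon K_C^k\xrightarrow{\sim}K_C^{k-1}\otimes K_C$, hence injective. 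Consequently, $\mathrm{gr}_k\mathcal{G}\neq0$ forces $\mathrm{gr}_{k-1}\mathcal{G}\neq0$. Iterating downwards, the set of indices $j$ with $\mathrm{gr}_j\mathcal{G}\neq0$ is an initial segment, and since each nonzero graded piece has rank one, this initial segment is precisely $\{0,1,\dots,r-1\}$.

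Since each $\mathrm{gr}_j\mathcal{G}$ is a subsheaf of $K_C^j$, we have $\deg\mathrm{gr}_j\mathcal{G}\le j(2g-2)$. Summing along the filtration (using additivity of degree on a smooth curve) gives
$$
\deg\mathcal{G}=\sum_{j=0}^{r-1}\deg\mathrm{gr}_j\mathcal{G}\le(2g-2)\cdot\tfrac{r(r-1)}{2}=(g-1)r(r-1),
$$
so $\mu(\mathcal{G})\le(g-1)(r-1)$. Since $r\le m$ and $g\ge 2$, this is strictly less than $(g-1)m=\mu(\mathcal{F}^m)$, which establishes stability.

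The argument is essentially mechanical once the right filtration is chosen; the only real content is the observation—forced by the isomorphism property of each $\theta_i$—that any Higgs subsheaf must occupy a consecutive initial block of the grading. No genuine obstacle beyond that; the hypothesis $g\ge 2$ is used precisely to turn the inequality $r-1<m$ into a strict inequality of slopes.
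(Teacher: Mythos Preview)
Your proof is correct and follows essentially the same idea as the paper's: because each $\theta_i$ is an isomorphism, any Higgs sub-object is forced to be concentrated in the lowest $r$ graded pieces, giving slope at most $(g-1)(r-1)<(g-1)m$. The only difference is cosmetic—the paper passes to the generic point and classifies invariant subspaces of a single Jordan block (thereby identifying the saturated subbundle exactly as $K_C^{r-1}\oplus\dots\oplus\mathcal{O}_C$), whereas you work directly with the weight filtration on an arbitrary subsheaf and bound degrees; your version is slightly more self-contained in that it handles non-saturated subsheaves without a separate reduction.
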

\begin{proof}
By restricting to the generic point, this reduces to a linear algebra problem, that is, a $k(C)$-vector space $V$ of dimension $m+1$ together with a basis $\{e_1,\dots,e_{m+1}\}$ such that 
$$
\theta^m(e_1,\dots,e_{m+1})=(e_1,\dots,e_{m+1})J_{m+1}(0),
$$
where $J_{m+1}(0)$ is the upper triangle Jordan block of order $m+1$ with eigenvalue $0$. If $W$ is a $J_{m+1}(0)$-invariant subspace of dimension $r$, then $J_{m+1}(0)^r=0$ on $W$, and thus $W=\mathrm{span}\{e_{m-r+2},\dots,e_{m+1}\}$, as 
$$
J_{m+1}(0)^r=
\begin{pmatrix}
\mathrm{O}_{(m-r+1)\times(m-r+1)}&\mathrm{I}_{(m-r+1)\times r}\\ 
\mathrm{O}_{r\times(m-r+1)}&\mathrm{O}_{r\times r}
\end{pmatrix}.
$$
As a consequence, the rank $r$ Higgs subbundle of $(\mathcal{F}^m,\theta^m)$ must be the direct sum of its last $r$ factors, whose slope is strictly less than the slope of $\mathcal{F}^m$, and this shows $(\mathcal{F}^m,\theta^m)$ is a stable Higgs bundle.
\end{proof}

For each $0<\ell\leq 2(g-1)$, we choose and then fix a degree $\ell$ line bundle $\mathcal{A}_{\ell}$ such that $H^0(C,K_C\otimes\mathcal{A}_{\ell}^{-1})\neq0$. Fix $x_0\in C(k)$, we take $\mathcal{L}$ in the argument above to be some line bundle $\mathcal{L}_{\ell}$ such that 
$$
\mathcal{A}_{\ell}\otimes\mathcal{L}_{\ell}\otimes K_C^{p-1}\cong\mathcal{O}_C(mpx_0)
$$
for some $m\in\Z$. Then $\mathcal{A}_{\ell}\otimes\mathcal{L}_{\ell}\otimes K_C^{p-1}$ can be equipped with the canonical connection. Pick a non-zero element in $H^0(C,K_C\otimes\mathcal{A}_{\ell}^{-1})$, which defines a non-zero morphism 
$$
\theta\colon\mathcal{A}_{\ell}\otimes\mathcal{L}_{\ell}\otimes K_C^{p-1}\to\mathcal{L}_{\ell}\otimes K_C^{p-1}\otimes K_C\subseteq F^*\mathcal{E}'\otimes K_C.
$$

Set
$$
\mathcal{V}_{\ell}=\mathcal{A}_{\ell}\otimes \mathcal{L}_{\ell}\otimes K_C^{p-1}\oplus F^*\mathcal{E}',
$$
which is equipped with a connection $\nabla_{\ell}$ given by
$$
\begin{pmatrix}
\nabla_{\can}&\theta\\ 
0&\nabla_{\can}
\end{pmatrix},
$$
where $\theta$ is defined as above.

\begin{lemma}\label{lemma: V,nalba is stable}
$(\mathcal{V}_{\ell},\nabla_{\ell})$ is $\nabla$-stable.
\end{lemma}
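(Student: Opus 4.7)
The plan is to analyze saturated $\nabla_{\ell}$-invariant subsheaves $\mathcal{W} \subsetneq \mathcal{V}_{\ell}$ via the extension of flat bundles
\begin{equation*}
0 \to (F^*\mathcal{E}', \nabla_{\can}) \to (\mathcal{V}_{\ell}, \nabla_{\ell}) \to (\mathcal{M}, \nabla_{\can}) \to 0, \qquad \mathcal{M} := \mathcal{A}_{\ell} \otimes \mathcal{L}_{\ell} \otimes K_C^{p-1}.
\end{equation*}
Setting $\mathcal{W}' := \mathcal{W} \cap F^*\mathcal{E}'$, saturation bookkeeping (the quotient $\mathcal{V}_{\ell}/\mathcal{W}$ is torsion-free, hence so is $F^*\mathcal{E}'/\mathcal{W}'$) shows that $\mathcal{W}'$ is a saturated $\nabla_{\can}$-invariant subsheaf of $F^*\mathcal{E}'$. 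Since $\mathcal{E}' = F_*\mathcal{L}_{\ell}$ is stable by Theorem \ref{thm: stable pushforward under Frobenius is stable} (we are in the $g\geq 2$ setting), Cartier descent gives $\mathcal{W}' \in \{0, F^*\mathcal{E}'\}$. Analogously, if $F^*\mathcal{E}' \subseteq \mathcal{W}$, the saturation of $\mathcal{W}/F^*\mathcal{E}'$ in the line bundle $\mathcal{M}$ forces $\mathcal{W} = F^*\mathcal{E}'$ or $\mathcal{W} = \mathcal{V}_{\ell}$.

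For $\mathcal{W} = F^*\mathcal{E}'$, Riemann-Roch gives $\deg \mathcal{E}' = \deg \mathcal{L}_{\ell} + (p-1)(g-1)$, and a direct computation yields
\begin{equation*}
\mu(\mathcal{V}_{\ell}) - \mu(F^*\mathcal{E}') \;=\; \frac{\mu(\mathcal{M}) - \mu(F^*\mathcal{E}')}{p+1} \;=\; \frac{\ell + (p-1)(g-1)}{p+1} \;>\; 0,
\end{equation*}
since $\ell > 0$.

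The remaining case $\mathcal{W}' = 0$, $\mathcal{W} \neq 0$ is the decisive one. Saturation of $\mathcal{W}$ in $\mathcal{V}_{\ell}$ then forces $\mathcal{W}$ to have rank one and to map isomorphically onto $\mathcal{M}$, so $\mathcal{W}$ is the graph of some $\mathcal{O}_C$-linear $\phi \in \Hom(\mathcal{M}, F^*\mathcal{E}')$; translating $\nabla_{\ell}$-invariance into a relation on $\phi$ yields the cocycle identity
\begin{equation*}
\theta \;=\; \phi \circ \nabla_{\can} - \nabla_{\can} \circ \phi \quad \text{in}\quad \Hom(\mathcal{M}, F^*\mathcal{E}' \otimes K_C).
\end{equation*}
To force $\phi = 0$, invoke Lemma \ref{lemma: Canonical filtration on curves}(3): the canonical filtration is the Harder-Narasimhan filtration of $F^*\mathcal{E}'$, so $\mu_{\max}(F^*\mathcal{E}') = \mu(\mathcal{V}^{p-1}) = \deg \mathcal{L}_{\ell} + (p-1)(2g-2)$, whereas $\mu(\mathcal{M}) = \ell + \mu(\mathcal{V}^{p-1}) > \mu_{\max}(F^*\mathcal{E}')$ precisely because $\ell > 0$. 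Any non-zero $\phi$ from the line bundle $\mathcal{M}$ to $F^*\mathcal{E}'$ must be injective and would produce a subsheaf of $F^*\mathcal{E}'$ of slope $\mu(\mathcal{M}) > \mu_{\max}(F^*\mathcal{E}')$, which is impossible. Hence $\phi = 0$, so $\theta = 0$, contradicting the non-vanishing of $\theta$.

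The only real technical point is translating $\nabla_{\ell}$-invariance of the graph subsheaf into the cocycle identity on $\phi$ (one must be careful about the sign/order conventions arising from the block form of $\nabla_{\ell}$). Once that identity is in hand, the entire argument rests on the strict slope inequality $\mu(\mathcal{M}) > \mu_{\max}(F^*\mathcal{E}')$, which is exactly why one chose $\mathcal{A}_{\ell}$ of positive degree in the construction.
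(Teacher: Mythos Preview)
There is a genuine gap. Your claim that Cartier descent plus stability of $\mathcal{E}'=F_*\mathcal{L}_\ell$ forces $\mathcal{W}'\in\{0,F^*\mathcal{E}'\}$ is false: stability of $\mathcal{E}'$ only says that every proper nonzero subsheaf has strictly smaller slope, not that there are no proper nonzero subsheaves. For $p\ge2$ on a curve of genus $\ge2$, the rank-$p$ bundle $\mathcal{E}'$ has many proper subbundles $\widetilde{\mathcal{N}}$ of intermediate rank, and each produces a $\nabla_{\can}$-invariant subbundle $\mathcal{N}=F^*\widetilde{\mathcal{N}}\subsetneq F^*\mathcal{E}'$. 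So the case $0\neq\mathcal{W}'\subsetneq F^*\mathcal{E}'$ is not empty, and in fact it is the heart of the argument.

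The paper treats exactly this case at length. Writing $\mathcal{N}=\mathcal{W}'$ of rank $n$ with $0<n<p$, Cartier descent gives $\mathcal{N}=F^*\widetilde{\mathcal{N}}$ for some $\widetilde{\mathcal{N}}\subset\mathcal{E}'$, and one needs a quantitative bound on $\mu(\widetilde{\mathcal{N}})$, namely Sun's refined inequality (Theorem~\ref{thm: stronger inequality for slope bound}), to control $\deg(\mathcal{N})$. A careful computation then shows $\mu(\mathcal{W})\leq\mu(\mathcal{V}_\ell)$, with strict inequality when $\ell<2(g-1)$; the boundary case $\ell=2(g-1)$ requires an additional argument ruling out a putative rank-two $\nabla$-invariant subbundle using the canonical filtration and Lemma~\ref{lemma: stability of Higgs bundle}. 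Your handling of the cases $\mathcal{W}'=0$ and $\mathcal{W}'=F^*\mathcal{E}'$ is fine (and essentially matches the paper), but the proof cannot be completed without addressing the intermediate-rank case.
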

\begin{proof}
By Riemann-Roch theorem, we know that 
$$
\deg(F^*\mathcal{E}')=p\left((p-1)(g-1)+\deg(\mathcal{L}_{\ell})\right),
$$
and hence 
$$
\deg(\mathcal{V}_{\ell})=(p+2)(p-1)(g-1)+(p+1)\deg(\mathcal{L}_{\ell})+\ell.
$$
Thus
$$
\mu(F^*\mathcal{E}')=(p-1)(g-1)+\deg(\mathcal{L}_{\ell})<\frac{p+2}{p+1}(p-1)(g-1)+\frac{\ell}{p+1}+\deg(\mathcal{L}_{\ell})=\mu(\mathcal{V}_{\ell}).
$$
Now suppose $\mathcal{M}$ is a $\nabla$-invariant proper subbundle of $(\mathcal{V}_{\ell},\nabla_{\ell})$. Consider the composition 
$$
\mathcal{M}\hookrightarrow\mathcal{V}_{\ell}\stackrel{\pi}{\longrightarrow}\mathcal{A}_{\ell}\otimes\mathcal{L}_{\ell}\otimes K_C^{p-1},
$$
where $\pi$ is the natural projection. There are two cases:
\begin{itemize}
    \item[$(1)$] $\pi(\mathcal{M})=0$: In this case we have $\mathcal{M}\subseteq F^*\mathcal{E}'$. By Cartier descent, $(F^*\mathcal{E}',\nabla_{\can})$ is a $\nabla$-stable bundle as $\mathcal{E}'$ is a stable bundle. Hence 
    $$
    \mu(\mathcal{M})<\mu(F^*\mathcal{E}')<\mu(\mathcal{V}_{\ell}).
    $$
    \item[$(2)$] $\pi(\mathcal{M})=\mathcal{A}_{\ell}\otimes \mathcal{L}_{\ell}\otimes K_C^{p-1}$: In this case we  have the following exact sequence:
    $$
0\to\mathcal{N}\to\mathcal{M}\to \mathcal{A}_{\ell}\otimes \mathcal{L}_{\ell}\otimes K_C^{p-1}\to0,
    $$
    where $\mathcal{N}=\mathcal{M}\cap F^*\mathcal{E}'$ is a $\nabla$-invariant subbundle of $F^*\mathcal{E}'$. Claim that $\mathcal{N}\neq0$. Otherwise, we have $\mathcal{M}\cong\mathcal{A}_{\ell}\otimes \mathcal{L}_{\ell}\otimes K_C^{p-1}$. As $\deg(\mathcal{A}_{\ell})=\ell>0$, $\mathcal{A}_{\ell}\otimes\mathcal{L}_{\ell}\otimes K_C^{p-1}$ is actually the maximal destabilizer of $\mathcal{V}_{\ell}$. Therefore, $\mathcal{M}=\mathcal{A}_{\ell}\otimes\mathcal{L}_{\ell}\otimes K_C^{p-1}$. However, it is not $\nabla$-invariant by construction. Contradiction. Denote the rank of $\mathcal{N}$ by $n$. Now we know $0<n<p$. Again by Cartier descent, there is a unique rank $n$ subbundle $\widetilde{\mathcal{N}}$ of $\mathcal{E}'$ such that $\mathcal{N}=F^*\widetilde{\mathcal{N}}$. By Theorem \ref{thm: stronger inequality for slope bound}, we know that
    $$
\frac{\deg(\mathcal{N})}{np}=\frac{\deg(\widetilde{\mathcal{N}})}{n}\leq\frac{(g-1)}{p}\left(n-1\right)+\frac{\deg(\mathcal{L}_{\ell})}{p}.
    $$
Thus
    $$
    \begin{aligned}
\mu(\mathcal{M})&=\frac{\deg(\mathcal{N})+2(p-1)(g-1)+\deg(\mathcal{L}_{\ell})+\ell}{n+1}\\&\leq\frac{\left(n^2-n+2p-2\right)(g-1)+\ell}{n+1}+\deg(\mathcal{L}_{\ell}).
    \end{aligned}
    $$
Now we show if $0<n<p$ and $0<\ell\leq 2(g-1)$, then $\mu(\mathcal{M})\leq\mu(\mathcal{V}_{\ell})$. Indeed, it suffices to show
$$
\frac{\left(n^2-n+2p-2\right)(g-1)+\ell}{n+1}\leq \frac{p+2}{p+1}(p-1)(g-1)+\frac{\ell}{p+1},
$$
which is reduced to show
$$
\frac{(p-n)}{(p+1)(n+1)}(n(p+1)(g-1)-(p-1)(g-1)-\ell)\ge0.
$$
But this is clear. Furthermore, when $\ell<2(g-1)$, the above inequality is strict. It implies $(\mathcal{V}_{\ell},\nabla_{\ell})$ is $\nabla$-stable if $\ell<2(g-1)$. Finally we show that $(\mathcal{V}_\ell,\nabla_\ell)$ is also $\nabla$-stable when $\ell=2(g-1)$. Suppose $\ell=2(g-1)$. (In this case, $\mathcal{A}_{\ell}=K_C$.) The inequality above becomes an equality only when $\mathrm{rk}(\mathcal{N})=1$. By above, $\deg(\mathcal{N})\leq \deg(\mathcal{L}_\ell)$. Without lose of generality we may simply take $\mathcal{L}_{\ell}=\mathcal{O}_C$ in the case $\ell=2(g-1)$. If $\deg(\mathcal{N})<0$, then 
$$
\mu(\mathcal{M})<p(g-1)=\mu(\mathcal{V}_{2(g-1)}).
$$
So we may assume $\deg(\mathcal{N})=0$. Notice that $\mathcal{M}$ cannot contain $K_C^p$: If it were the case, since
$$
K_C^p\stackrel{\nabla|_{K_C^p}}{\longrightarrow}(K_C^p\oplus K_C^{p-1})\otimes K_C\twoheadrightarrow K_C^{p-1}\otimes K_C=K_C^p
$$
is an isomorphism, $\mathcal{M}$ would contain $K_C^p\oplus K_C^{p-1}$. Then, since
$$
K_C^{p-1}\stackrel{\nabla|_{K_C^{p-1}}}{\longrightarrow}\mathcal{N}^{p-2}\otimes K_C\twoheadrightarrow K_C^{p-2}\otimes K_C=K_C^{p-1}
$$
is again an isomorphism by Lemma \ref{lemma: Canonical filtration on curves}, where $\mathcal{N}^{\bullet}$ is the canonical filtration on $F^*\mathcal{E}'$. This shows $\mathcal{M}$ would contain $K_C^p\oplus\mathcal{N}^{p-2}$, which is impossible. Thus, if we define a filtration on $(\mathcal{V}_{2(g-1)},\nabla_{2(g-1)})$ by
$$
0\subset K_C^p\subset K_C^p\oplus\mathcal{N}^{\bullet},
$$
it follows that
$$
\mathrm{Gr}(\mathcal{M},\nabla_{2(g-1)}|_{\mathcal{M}})\subset\mathrm{Gr}(F^*\mathcal{E}',\nabla_{\can})
$$
where $\mathrm{Gr}$ denotes the graded bundle associated to the above filtration. Since $\mu(\mathcal{M})=p(g-1)>(p-1)(g-1)=\mu(F^*\mathcal{E}')$, we derive a contradiction to the stability of $\mathrm{Gr}(F^*\mathcal{E}',\nabla_{\can})$ as asserted in Lemma \ref{lemma: stability of Higgs bundle}. It implies that such a $\nabla$-invariant rank $2$ subbundle $\mathcal{M}$ which is an extension of $K_C^p$ by a degree zero line bundle $\mathcal{N}\subset F^*\mathcal{E}'$ does not exist. So $(\mathcal{V}_{2(g-1)},\nabla_{2(g-1)})$ is also $\nabla$-stable. This completes the proof.
\end{itemize}
\end{proof}
It is clear that the exponent of nilpotency of $p$-curvature of $(\mathcal{V}_{\ell},\nabla_{\ell})$ is $\leq 1$, since $0\subset F^*\mathcal{E}'\subset\mathcal{V}_{\ell}$ is a filtration of $\mathcal{V}_{\ell}$ such that the $p$-curvature of each graded piece is zero. Thus $(\mathcal{E}_{\ell},\theta_{\ell})=C(\mathcal{V}_{\ell},\nabla_{\ell})$ is well-defined, and it is a stable Higgs bundle as followed from Lemma \ref{lemma: V,nalba is stable}.
\begin{prop}\label{prop: counter-example of the Lan-Sheng-Zuo conjecture of rank p+1 on every p over curve}
For integer $\ell$ with $0<\ell\leq 2(g-1)$, the nilpotent stable rank $p+1$ Higgs bundle $(\mathcal{E}_{\ell},\theta_{\ell})$ of exponent $\leq 1$ constructed as above is not strongly semistable.
\end{prop}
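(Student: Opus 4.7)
The plan is to exhibit an explicit Griffiths transverse saturated filtration $\mathcal{M}^\bullet$ on the $\nabla$-stable flat bundle $(\mathcal{V}_\ell,\nabla_\ell)$ whose associated graded Higgs bundle is \emph{stable} of nilpotency exponent exactly $p$. Since a stable Higgs bundle is in particular polystable, $\mathcal{M}^\bullet$ is gr-polystable, so by Lemma~\ref{lemma: polystable representative has minimal exponent} its exponent $p$ is the minimum achievable among gr-semistable filtrations of $(\mathcal{V}_\ell,\nabla_\ell)$. Since $p>p-1$, every gr-semistable filtration produces a graded Higgs bundle outside $\HIG_{p-1}(C)$, so the inverse Cartier transform is not defined on it, and no semistable Higgs-de Rham flow starting from $(\mathcal{E}_\ell,\theta_\ell)=C(\mathcal{V}_\ell,\nabla_\ell)$ can be extended past the first step; hence $(\mathcal{E}_\ell,\theta_\ell)$ is not strongly semistable.

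Concretely, writing $\mathcal{V}^\bullet$ for the canonical filtration on $F^*\mathcal{E}'$, I would set
\[
\mathcal{M}^0=\mathcal{V}_\ell,\quad \mathcal{M}^i=(\mathcal{A}_\ell\otimes\mathcal{L}_\ell\otimes K_C^{p-1})\oplus\mathcal{V}^i\ \text{ for }1\le i\le p-1,\quad \mathcal{M}^p=\mathcal{A}_\ell\otimes\mathcal{L}_\ell\otimes K_C^{p-1},\quad \mathcal{M}^{p+1}=0.
\]
Saturation is automatic since each $\mathcal{M}^i$ is a direct summand of $\mathcal{V}_\ell$ as a bundle. Griffiths transversality $\nabla_\ell(\mathcal{M}^i)\subseteq\mathcal{M}^{i-1}\otimes K_C$ reduces level by level to the canonical-filtration inclusion $\nabla_{\can}(\mathcal{V}^i)\subseteq\mathcal{V}^{i-1}\otimes K_C$ from Lemma~\ref{lemma: Canonical filtration on curves}(1), combined with the key bound $\theta(\mathcal{A}_\ell\otimes\mathcal{L}_\ell\otimes K_C^{p-1})\subseteq\mathcal{V}^{p-1}\otimes K_C\subseteq\mathcal{V}^{i-1}\otimes K_C$ valid for every $1\le i\le p$. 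A direct computation gives $\mathrm{gr}^i\cong K_C^i\otimes\mathcal{L}_\ell$ for $0\le i\le p-1$ and $\mathrm{gr}^p=\mathcal{A}_\ell\otimes\mathcal{L}_\ell\otimes K_C^{p-1}$; the graded Higgs field $\bar\theta^i\colon\mathrm{gr}^i\to\mathrm{gr}^{i-1}\otimes K_C$ is the canonical-filtration isomorphism from Lemma~\ref{lemma: Canonical filtration on curves}(2) for $1\le i\le p-1$ and the non-zero morphism of line bundles induced by $\theta$ for $i=p$. Since $\bar\theta^1,\ldots,\bar\theta^p$ are all non-zero morphisms of line bundles (hence injective sheaf maps), the $p$-fold iterate $\mathrm{gr}^p\to\mathrm{gr}^0\otimes K_C^{\otimes p}$ is non-zero, so the nilpotency exponent of $\bar\theta$ is exactly $p$.

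The final and most delicate step — the main technical obstacle — is to verify that this graded Higgs bundle is stable. Following the pattern of Lemma~\ref{lemma: stability of Higgs bundle}, at the generic point $\bar\theta$ acts as a single Jordan block of size $p+1$, so its only invariant subspaces are $\bigoplus_{i=0}^{k}\mathrm{gr}^i|_\eta$ for $k=0,\ldots,p-1$; since each $\bigoplus_{i=0}^{k}\mathrm{gr}^i$ is already a saturated sub-bundle (direct summand), these exhaust the proper saturated Higgs sub-sheaves of the graded. The subtle point is excluding Higgs subs that non-trivially involve the extra top piece $\mathrm{gr}^p$: but $\bar\theta^p$ is an injective morphism of line bundles whose saturated image equals the whole of $\mathrm{gr}^{p-1}\otimes K_C$, so any saturated Higgs sub containing $\mathrm{gr}^p$ must contain $\mathrm{gr}^{p-1}$, and then, iteratively via the isomorphisms $\bar\theta^{p-1},\ldots,\bar\theta^1$, every other $\mathrm{gr}^i$, i.e.\ the whole graded. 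A direct slope computation gives $\mu\bigl(\bigoplus_{i=0}^{k}\mathrm{gr}^i\bigr)=k(g-1)+\deg\mathcal{L}_\ell$, so the maximum proper value $\mu(F^*\mathcal{E}')=(p-1)(g-1)+\deg\mathcal{L}_\ell$ is strictly less than $\mu(\mathcal{V}_\ell)$ by $\frac{(p-1)(g-1)+\ell}{p+1}>0$ in our range $g\ge 2$, $0<\ell\le 2(g-1)$, which establishes stability and completes the plan.
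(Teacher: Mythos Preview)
Your proof is correct and follows essentially the same approach as the paper: both exhibit the filtration obtained by prepending $\mathcal{A}_\ell\otimes\mathcal{L}_\ell\otimes K_C^{p-1}$ to the canonical filtration of $F^*\mathcal{E}'$, show that the associated graded Higgs bundle is stable of nilpotency exponent exactly $p$ via the Jordan-block argument of Lemma~\ref{lemma: stability of Higgs bundle}, and conclude via Lemma~\ref{lemma: polystable representative has minimal exponent}. The only cosmetic difference is that the paper also notes, via Lemma~\ref{lemma: uniqueness of gr-semistable fil}, that stability of the graded forces this gr-semistable filtration to be the unique one, whereas you rely solely on the minimal-exponent lemma; either route suffices.
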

\begin{proof}
Note that a Griffiths transverse saturated filtration on $(\mathcal{V}_{\ell},\nabla_{\ell})=C^{-1}(\mathcal{E}_{\ell},\theta_{\ell})$ is given by
$$
0\subset\mathcal{A}_{\ell}\otimes \mathcal{L}_{\ell}\otimes K_C^{p-1}\subset\mathcal{A}_{\ell}\otimes \mathcal{L}_{\ell}\otimes K_C^{p-1}\oplus\mathcal{N}^{\bullet},
$$
where $\mathcal{N}^{\bullet}$ is the canonical filtration of $F^*\mathcal{E}'$. Moreover, this filtration of $(\mathcal{V}_{\ell},\nabla_{\ell})$ is the unique gr-semistable one since the associated graded Higgs bundle is 
$$
(\mathcal{A}_{\ell}\otimes K_C^{p-1}\oplus K_C^{p-1}\oplus K_C^{p-2}\oplus\dots\oplus\mathcal{O}_C,\theta\oplus\bigoplus_{i=1}^{p-1}\theta_i)\otimes(\mathcal{L}_{\ell},0),
$$
which is a stable Higgs bundle with the exponent of nilpotency $=p$ by the same argument as Lemma \ref{lemma: stability of Higgs bundle}. By Lemma \ref{lemma: polystable representative has minimal exponent}, there is no other gr-semistable filtration on $(\mathcal{V}_{\ell},\nabla_{\ell})$ such that the exponent of nilpotency of the associated graded Higgs bundle is $\leq p-1$. This completes the proof.
\end{proof}
\begin{cor}\label{cor: cor for all big rank}
Let $C$ be a curve of genus $g\ge2$. There exists a nilpotent semistable Higgs bundle $(\mathcal{E},\theta)$ of exponent $\leq1$ and any big rank which is not strongly semistable.
\end{cor}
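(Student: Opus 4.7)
The plan is to reduce the general big rank $r \geq p+1$ case to the rank $p+1$ case handled by Proposition \ref{prop: counter-example of the Lan-Sheng-Zuo conjecture of rank p+1 on every p over curve}. For $r = p+1$ that proposition itself supplies the example, so I fix $r > p+1$ and construct a rank-$r$ counter-example by direct-summing the rank-$(p+1)$ example with line bundle summands carrying the zero Higgs field.

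The first step is to fix the specific parameters $\ell = 2(g-1)$ and $\mathcal{L}_\ell = \mathcal{O}_C$, which is the choice already used in the proof of Lemma \ref{lemma: V,nalba is stable}. A direct computation from the formula $\deg(\mathcal{V}_\ell) = (p+2)(p-1)(g-1) + (p+1)\deg(\mathcal{L}_\ell) + \ell$ gives $\deg(\mathcal{V}_{2(g-1)}) = p(p+1)(g-1)$, hence $\mu(\mathcal{V}_{2(g-1)}) = p(g-1)$ and $\mu(\mathcal{E}_{2(g-1)}) = g-1 \in \Z$. Pick any line bundle $\mathcal{N}$ on $C$ of degree $g-1$, and set
$$
(\mathcal{E}, \theta) := (\mathcal{E}_{2(g-1)}, \theta_{2(g-1)}) \oplus (\mathcal{N}, 0)^{\oplus (r-p-1)}.
$$
Then $(\mathcal{E}, \theta)$ has rank $r$, is nilpotent of exponent $\leq 1$, and both direct summands are semistable of slope $g-1$, so $(\mathcal{E}, \theta)$ is semistable.

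To show $(\mathcal{E}, \theta)$ is not strongly semistable, I would pass to its inverse Cartier transform
$$
(\mathcal{V}, \nabla) = (\mathcal{V}_{2(g-1)}, \nabla_{2(g-1)}) \oplus (F^*\mathcal{N}, \nabla_{\can})^{\oplus (r-p-1)}
$$
and exhibit an explicit gr-polystable filtration. One combines the unique gr-semistable filtration on $(\mathcal{V}_{2(g-1)}, \nabla_{2(g-1)})$ identified in the proof of Proposition \ref{prop: counter-example of the Lan-Sheng-Zuo conjecture of rank p+1 on every p over curve} (which is in fact gr-polystable because its associated graded is stable of slope $p(g-1)$, with exponent of nilpotency equal to $p$) with the trivial filtration on the second summand (whose associated graded is $((F^*\mathcal{N})^{\oplus (r-p-1)}, 0)$, polystable of slope $p(g-1)$ and exponent $0$). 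Since the two slopes agree, the direct-sum filtration is genuinely gr-polystable, and its associated graded has nilpotency exponent exactly $p$.

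By Lemma \ref{lemma: polystable representative has minimal exponent}, every gr-semistable filtration of $(\mathcal{V}, \nabla)$ must then have associated graded of exponent at least $p$, which exceeds $p-1$. Hence no Higgs-de Rham flow in $\HIG_{p-1}$ can be initiated from $(\mathcal{E}, \theta)$, so $(\mathcal{E}, \theta)$ is not strongly semistable. The only delicate point is verifying that the combined filtration really is gr-polystable, which boils down to the slope matching $\mu(\mathcal{V}_{2(g-1)}) = p\deg(\mathcal{N})$ together with the polystability of each graded piece; both are immediate from the above setup and from $\mathcal{N}$ being a line bundle.
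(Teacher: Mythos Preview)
Your proposal is correct and follows essentially the same argument as the paper: you take $(\mathcal{E}_{2(g-1)},\theta_{2(g-1)})$ (whose slope is an integer), direct-sum it with line bundles of matching slope carrying the zero Higgs field, and apply Lemma~\ref{lemma: polystable representative has minimal exponent} to the gr-polystable filtration obtained by combining the unique gr-semistable filtration on $(\mathcal{V}_{2(g-1)},\nabla_{2(g-1)})$ with the trivial filtration on the line-bundle summands. The only cosmetic difference is that you add all $r-p-1$ line-bundle summands at once, whereas the paper adds one and then writes ``repeating this process''.
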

\begin{proof}
Set $(\mathcal{E},\theta)=(\mathcal{E}_{2(g-1)},\theta_{2(g-1)})$. Then $\mu(\mathcal{E})$ is an integer. Let $\mathcal{M}$ be a line bundle whose slope is the same as the slope of $(\mathcal{E},\theta)$. We know that $(\widetilde{\mathcal{E}},\widetilde{\theta})=(\mathcal{E},\theta)\oplus(\mathcal{M},0)$ is a nilpotent semistable Higgs bundle of exponent $\leq1$. Since the inverse Cartier transform preserves direct sum,  
$$
C^{-1}(\widetilde{\mathcal{E}},\widetilde{\theta})= C^{-1}(\mathcal{E},\theta)\oplus (F^*\mathcal{M},\nabla_{\can}).
$$
Then 
$$
0\subset F^*\mathcal{M}\subset F^*\mathcal{M}\oplus\mathcal{N}^{\bullet},
$$
is a gr-polystable filtration on $C^{-1}(\widetilde{\mathcal{E}},\widetilde{\theta})$, where $\mathcal{N}^{\bullet}$ is the gr-semistable filtration on $C^{-1}(\mathcal{E},\theta)$, and the exponent of nilpotency of the associated graded Higgs bundle is $p$. By Lemma \ref{lemma: polystable representative has minimal exponent} again, there is no gr-semistable filtration on it such that the exponent of nilpotency of the associated graded Higgs bundle is $\leq p-1$. This produces a semistable but not strongly semistable Higgs bundle of rank $p+2$. Repeating this process yields a semistable but not strongly semistable Higgs bundle of any big rank.
\end{proof}
Next, we provide a construction of two strongly semistable Higgs bundles whose tensor product is semistable but not strongly semistable. It is based on the existence of a uniformizing flat bundle with \emph{nilpotent} $p$-curvature. 

From now on, we use $K_C^{\frac12}$ to denote a fixed line bundle such that its square is isomorphic to $K_C$.
\begin{defn}\label{defn: uniformizing flat bundle}
A flat bundle $(\mathcal{V}_{\unif},\nabla_{\unif})$ together with a Griffiths transverse filtration is called a \emph{uniformizing flat bundle}, if the associated graded Higgs bundle is the uniformizing Higgs bundle $(\mathcal{E}_{\unif},\theta_{\unif}):=(K_C^{\frac12}\oplus K_C^{-\frac12},\theta)$, where $\theta\colon K_C^{\frac12}\to K_C^{-\frac12}\otimes K_C$ is an isomorphism.
\end{defn}

It is not difficult to construct a uniformizing flat bundle, but it seems difficult to construct one with nilpotent $p$-curvature. The following result provides a partial solution.
\begin{thm}[\protect{\cite[Theorem 2.1]{MR3905130}}]\label{thm: existence of uni flat bundle with nilpotent p-curvature}
Assume $p$ odd and $C$ to be generic. Then there exists a $W_2(k)$-lifting $C\hookrightarrow\widetilde{C}$ such that the uniformizing Higgs bundle with respect to the $C\hookrightarrow\widetilde{C}$ is two-periodic. 
More precisely, there is an isomorphism of graded Higgs bundles
$$
(\mathrm{Gr}_{\mathrm{HN}}\circ C^{-1})(\mathcal{E}_{\unif},\theta_{\unif})\cong (\mathcal{E}_{\unif},\theta_{\unif})\otimes(\mathcal{L},0),
$$
where $\mathrm{HN}$ denotes for the Harder-Narasimhan filtration, and $\mathcal{L}$ is a torsion line bundle of order two.
\begin{remk}
To be precise, in the statement of \cite[Theorem 2.1]{MR3905130}, the authors only mention the existence of such filtration, but it can be seen from the outline of proof that it is the Harder-Narasimhan filtration: In fact, the filtration is given as $\text{Fil}_1$ in the proof of Proposition $3.6$ in \cite{MR3905130}.
\end{remk}

\end{thm}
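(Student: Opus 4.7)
The plan is to analyze the inverse Cartier transform $(\mathcal{V},\nabla):=C^{-1}(\mathcal{E}_{\unif},\theta_{\unif})$ directly and to identify its Harder--Narasimhan graded with a $2$-torsion twist of the uniformizing Higgs bundle. First, I would record the structural constraints. The only proper $\theta_{\unif}$-invariant subsheaf of $\mathcal{E}_{\unif}$ is $K_C^{-1/2}$, of slope $-(g-1)<0$, so $(\mathcal{E}_{\unif},\theta_{\unif})$ is a stable Higgs bundle of slope zero. By the Ogus--Vologodsky equivalence, $(\mathcal{V},\nabla)$ is $\nabla$-stable; moreover, since $\det(\mathcal{E}_{\unif})\cong\mathcal{O}_C$ and $\tr(\theta_{\unif})=0$, the determinant bundle is $\det(\mathcal{V})\cong\mathcal{O}_C$ with canonical connection.

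Next, assume $\mathcal{V}$ is unstable (the semistable case would contradict the desired conclusion) and let $0\subset\mathcal{M}\subset\mathcal{V}$ be the HN filtration, so that $\mathcal{M}$ is a line bundle of some degree $d>0$ with $\mathcal{V}/\mathcal{M}\cong\mathcal{M}^{-1}$ by the determinantal constraint. The second fundamental form
$$
\overline{\theta}\colon\mathcal{M}\hookrightarrow\mathcal{V}\stackrel{\nabla}{\longrightarrow}\mathcal{V}\otimes K_C\twoheadrightarrow\mathcal{M}^{-1}\otimes K_C
$$
is nonzero by $\nabla$-stability of $(\mathcal{V},\nabla)$, and so yields a nonzero section of $\mathcal{M}^{-2}\otimes K_C$, forcing $d\leq g-1$. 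If one can establish the matching lower bound $d=g-1$, then $\overline{\theta}$ is a nonzero map between line bundles of the same degree, hence an isomorphism $\mathcal{M}\stackrel{\sim}{\longrightarrow}\mathcal{M}^{-1}\otimes K_C$, so $\mathcal{M}^{\otimes 2}\cong K_C$. Setting $\mathcal{L}:=\mathcal{M}\otimes K_C^{-1/2}$, one obtains $\mathcal{L}^{\otimes 2}\cong\mathcal{O}_C$, and the HN graded Higgs bundle $(\mathcal{M}\oplus\mathcal{M}^{-1},\overline{\theta})$ is readily identified with $(\mathcal{E}_{\unif},\theta_{\unif})\otimes(\mathcal{L},0)$.

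The main obstacle is pinning down $d=g-1$ on the nose, since a priori $d$ could be any positive integer at most $g-1$. I would attack this by invoking Ogus--Vologodsky's explicit exponential-twisting description of $C^{-1}$, tied to a local Frobenius lift determined by $\widetilde{C}$, to express $\nabla$ as a perturbation of $\nabla_{\can}$ on a Frobenius-twisted model of $\mathcal{E}_{\unif}$, and then apply Sun-type slope bounds in the spirit of Theorem \ref{thm: stronger inequality for slope bound} to produce the matching lower bound. The genericity hypothesis on $C$ is used to avoid small-codimensional bad loci in the moduli of curves together with their $W_2(k)$-lifts where the degeneration $d<g-1$ could occur, while the oddness of $p$ ensures that the square root $K_C^{1/2}$ and the relevant exponential twist behave as expected. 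Constructing a specific $\widetilde{C}$ realizing $d=g-1$ is a delicate deformation-theoretic/specialization step and constitutes the bulk of the work.
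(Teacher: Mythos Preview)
This theorem is not proved in the paper at all: it is cited verbatim from \cite{MR3905130}, and the only content added here is the remark identifying the relevant filtration with the Harder--Narasimhan filtration via \cite[Proposition~3.6]{MR3905130}. So there is no ``paper's own proof'' to compare against; I can only comment on your outline.

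Your structural reduction is correct and useful: if $\mathcal{V}=C^{-1}(\mathcal{E}_{\unif},\theta_{\unif})$ is unstable with maximal destabilizing line subbundle $\mathcal{M}$ of degree exactly $g-1$, then $\nabla$-stability forces the second fundamental form to be a nonzero map of line bundles of equal degree, hence an isomorphism, and the identification $\mathrm{Gr}_{\mathrm{HN}}(\mathcal{V},\nabla)\cong(\mathcal{E}_{\unif},\theta_{\unif})\otimes(\mathcal{L},0)$ with $\mathcal{L}^{\otimes 2}\cong\mathcal{O}_C$ follows immediately. The upper bound $d\le g-1$ is also fine.

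The genuine gap is the lower bound $d\ge g-1$ (and, implicitly, the instability of $\mathcal{V}$), and your proposed method does not close it. Sun's bound (Theorem~\ref{thm: stronger inequality for slope bound}) controls subbundles of $F_*\mathcal{L}$ for a line bundle $\mathcal{L}$; it says nothing about subbundles of $C^{-1}$ applied to a Higgs bundle with \emph{nonzero} Higgs field, where the underlying bundle is not a Frobenius pullback and the exponential twist is global rather than a local perturbation amenable to those techniques. More seriously, the degree $d$ genuinely depends on the choice of $W_2(k)$-lifting $\widetilde{C}$, and nothing in your sketch explains how to \emph{produce} a lifting achieving $d=g-1$. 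The argument in \cite{MR3905130} is of a rather different nature: it studies the self-map on (a cover of) the moduli of curves with $W_2$-lift induced by the flow operator, and uses a degeneration/specialization argument to find a fixed (or $2$-periodic) point; the genericity of $C$ enters because this only pins down a dense locus, not an arbitrary curve. Your ``deformation-theoretic/specialization step'' gestures in this direction but does not supply the mechanism.
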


Put $(\mathcal{V}_{\unif},\nabla_{\unif})=C^{-1}\circ \mathrm{Gr}_{\text{HN}}\circ C^{-1}(\mathcal{E}_{\unif},\theta_{\unif})$. Then by Theorem \ref{thm: existence of uni flat bundle with nilpotent p-curvature}, it is a uniformizing flat bundle with nilpotent $p$-curvature. For the case $p=2$, the above theorem does not work. However, there is a direct way to construct a uniformizing flat bundle when $g(C)$ is odd: Since both $K_C\cong F^*K_C^{\frac12}$ and $\mathcal{O}_C$ can be equipped with the canonical connection, we define $\mathcal{V}=K_C\oplus\mathcal{O}_C$ and equip it with a connection $\nabla$ defined by
\begin{equation}\label{eq: p=2 uniformizing flat bundle}
\begin{pmatrix}
\nabla_{\can}&\theta\\ 
0&\nabla_{\can}
\end{pmatrix},
\end{equation}
where $\theta\colon K_C\to\mathcal{O}_C\otimes K_C$ is an isomorphism. Then $0\subset K_C\subset\mathcal{V}$ is the unique gr-semistable filtration on $(\mathcal{V},\nabla)$ such that the associated graded Higgs bundle is $(\mathcal{E}_{\unif},\theta_{\unif})\otimes(K_C^{\frac12},0)$. When $g(C)$ is odd, there exists some line bundle $\mathcal{L}$ such that $F^*\mathcal{L}=K_C^{\frac12}$. Then $(\mathcal{V},\nabla)\otimes (K_C^{-\frac12},\nabla_{\can})$ is a uniformizing flat bundle with nilpotent $p$-curvature.  

Now we can proceed to our last construction. First, we define a Higgs bundle $(\mathcal{E}_1,\theta_1)$ as follows: When $p=2$, we set $(\mathcal{E}_1,\theta_1)=C(\mathcal{V},\nabla)$, where $(\mathcal{V},\nabla)$ is constructed as above; when $p$ is odd, we set $(\mathcal{E}_1,\theta_1)$ to be 
$$
C(\mathcal{V}_{\unif},\nabla_{\unif})\cong (\mathcal{E}_{\unif},\theta_{\unif})\otimes (\mathcal{L},0).
$$
Clearly, $(\mathcal{E}_1,\theta_1)$ is a nilpotent stable Higgs bundle of exponent $\leq1$. Next, let $\mathcal{E}_2=F_*K_C^{\frac{1-p}{2}}$ (note that $\mathcal{E}_2=F_*K_C^{-\frac12}$ when $p=2$). By Theorem \ref{thm: stable pushforward under Frobenius is stable} and Riemann-Roch theorem, $\mathcal{E}_2$ is a stable bundle of rank $p$ and degree zero. Moreover, by Lemma \ref{lemma: Canonical filtration on curves}, Lemma \ref{lemma: stability of Higgs bundle} and Lemma \ref{lemma: uniqueness of gr-semistable fil} the unique gr-semistable filtration on $F^*\mathcal{E}_2$ is given by its canonical filtration, and the associated graded Higgs bundle is 
$$
(K_C^{\frac{p-1}{2}}\oplus\dots\oplus\mathcal{O}_C\oplus\dots\oplus K_C^{\frac{1-p}{2}},\bigoplus_{i=1}^{p-1}\theta_i),
$$
where $\theta_i\colon K_C^{i}\to K_C^{i-1}\otimes K_C$ is an isomorphism for each $i=\frac{3-p}{2},\dots,\frac{p-1}{2}$. Third, we show that $(\mathcal{E}_1,\theta_1)\otimes (\mathcal{E}_2,0)$ is a stable Higgs bundle. For $p=2$, if $(\mathcal{F},\theta_{\mathcal{F}})$ is a proper Higgs subbundle of $(\mathcal{E}_1,\theta_1)\otimes (\mathcal{E}_2,0)$, its image in $(K_C\otimes\mathcal{E}_2,0)$ would be of rank $\leq 1$ by the argument similar to Lemma \ref{lemma: stability of Higgs bundle} and then we can check that the slope of $\mathcal{F}$ is smaller than that of $\mathcal{E}_1\otimes\mathcal{E}_2$. For $p$ odd, we have explicitly   
$$
(\mathcal{E}_1,\theta_1)\otimes(\mathcal{E}_2,0)\cong \left((\mathcal{E}_2\otimes K_C^{\frac12})\oplus(\mathcal{E}_2\otimes K_C^{-\frac12}),\id\otimes\theta_1\right)\otimes(\mathcal{L},0),
$$ 
which is a stable Higgs bundle: Indeed, if $(\mathcal{F},\theta_{\mathcal{F}})$ is a proper subbundle and if $r$ (resp. $s$) denotes the rank of the image (resp. kernel) of the projection from $(\mathcal{F},\theta_{\mathcal{F}})$ to $(\mathcal{E}_2\otimes K_C^{\frac12},0)$, an argument similar to Lemma \ref{lemma: stability of Higgs bundle} implies the inequality $r\leq s$, and it implies that the slope of $\mathcal{F}$ is less than $0$. Fourth, there is the following diagram of the Higgs-de Rham flow operator acting upon $(\mathcal{E}_1,\theta_1)\otimes (\mathcal{E}_2,0)$:
\begin{center}
\begin{tikzcd}
                                                                        & {C^{-1}(\mathcal{E}_1,\theta_1)\otimes (F^*F_*K_C^{\frac{1-p}{2}},\nabla_{\can})} \arrow[rd, "\mathrm{Gr}_{\mathcal{N}^{\bullet}}"] &                            \\
{(\mathcal{E}_1,\theta_1)\otimes(\mathcal{E}_2,0)} \arrow[ru, "C^{-1}"] &                                                                                                                            & {(\mathcal{E}',\theta')},
\end{tikzcd}
\end{center}
where $\mathcal{N}^{\bullet}$ is the filtration given by tensor product of the gr-semistable filtration on $C^{-1}(\mathcal{E}_1,\theta_1)$ and the canonical filtration on $F^*F_*K_C^{\frac{1-p}{2}}$. Thus $$(\mathcal{E}',\theta')\cong 
\begin{cases}
(K_C^{\frac{p-1}{2}}\oplus\dots\oplus K_C^{\frac{1-p}{2}},\bigoplus_{i=1}^{p-1}\theta_i)\otimes(\mathcal{E}_{\unif},\theta_{\unif})\otimes(K_C^{\frac12},0),& p=2;\\
(K_C^{\frac{p-1}{2}}\oplus\dots\oplus K_C^{\frac{1-p}{2}},\bigoplus_{i=1}^{p-1}\theta_i)\otimes(\mathcal{E}_{\unif},\theta_{\unif}),& \text{$p$ odd}. 
\end{cases}
$$
By Lemma \ref{lemma: E_k,theta_k is polystable} below, $(\mathcal{E}',\theta')$ is polystable whose exponent of nilpotency is exactly $p$. Hence $\mathcal{N}^{\bullet}$ is a gr-polystable filtration. It follows from Lemma \ref{lemma: polystable representative has minimal exponent} that there is no gr-semistable filtration on $C^{-1}(\mathcal{E}_1,\theta_1)\otimes (F^*F_*K_C^{\frac{1-p}{2}},\nabla_{\can})$ such that its associated graded Higgs bundle has exponent of nilpotency $\leq p-1$. This shows that $(\mathcal{E}_1,\theta_1)\otimes(\mathcal{E}_2,0)$ is \emph{not} strongly semistable. Fifth, by the strong semistability theorem, both $(\mathcal{E}_1,\theta_1)$ and $(\mathcal{E}_2,0)$ are strongly semistable. Summarizing the above altogether, we have constructed an example of strongly semistable Higgs bundles whose tensor product is semistable but not strongly semistable, as claimed.

\begin{lemma}\label{lemma: E_k,theta_k is polystable}
For any $m\in\Z_{>0}$, 
$$
(\mathcal{E}^m,\theta^m):=(K_C^{\frac{m-1}{2}}\oplus\dots\oplus K_C^{\frac{1-m}{2}},\bigoplus_{i=1}^{m-1}\theta_i)\otimes(\mathcal{E}_{\unif},\theta_{\unif})
$$
is a polystable Higgs bundle which is nilpotent of exponent $=m$.
\end{lemma}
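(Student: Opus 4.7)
The plan is to identify the first tensor factor with a symmetric power of the uniformizing Higgs bundle and then decompose via a Clebsch--Gordan-type isomorphism. The factor $(K_C^{(m-1)/2} \oplus \dots \oplus K_C^{(1-m)/2}, \bigoplus_{i=1}^{m-1}\theta_i)$ is exactly $\sym^{m-1}(\mathcal{E}_{\unif}, \theta_{\unif})$: both are built from the same graded line bundles with Higgs field given by a chain of isomorphisms. Hence $(\mathcal{E}^m, \theta^m) \cong \sym^{m-1}\mathcal{E}_{\unif} \otimes \mathcal{E}_{\unif}$ as Higgs bundles, and the aim is to establish
\[
\sym^{m-1}\mathcal{E}_{\unif} \otimes \mathcal{E}_{\unif} \;\cong\; \sym^m \mathcal{E}_{\unif} \oplus \sym^{m-2}\mathcal{E}_{\unif}
\]
(using $\det \mathcal{E}_{\unif} \cong \mathcal{O}_C$ with zero Higgs). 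First I would write the natural short exact sequence
\[
0 \to \sym^{m-2}\mathcal{E}_{\unif} \to \sym^{m-1}\mathcal{E}_{\unif} \otimes \mathcal{E}_{\unif} \to \sym^m \mathcal{E}_{\unif} \to 0
\]
of Higgs bundles --- the inclusion coming from multiplication by the $\wedge^2$-invariant and the surjection being the multiplication map $\sigma \otimes v \mapsto \sigma v$, both manifestly Higgs-equivariant --- and then construct a Higgs-equivariant splitting.

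Once the decomposition is in hand, the remaining assertions follow by inspection. Each $\sym^k\mathcal{E}_{\unif} \cong K_C^{k/2} \oplus K_C^{(k-2)/2} \oplus \dots \oplus K_C^{-k/2}$ carries a Higgs field given by a chain of isomorphisms, so the stability lemma for $(\mathcal{F}^n, \theta^n)$ tells us it is stable of slope zero; two stable Higgs bundles of the same slope produce a polystable direct sum, yielding polystability of $(\mathcal{E}^m, \theta^m)$. For the exponent, on $\sym^k\mathcal{E}_{\unif}$ the Higgs field at a generic point is a single Jordan block of size $k+1$ and so has exponent exactly $k$; the direct sum $\sym^m\mathcal{E}_{\unif} \oplus \sym^{m-2}\mathcal{E}_{\unif}$ therefore has exponent $\max(m, m-2) = m$.

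The main obstacle is constructing the Higgs-equivariant splitting in positive characteristic. The classical polarization $x^a y^{m-a} \mapsto \tfrac{a}{m}\, x^{a-1}y^{m-a} \otimes x + \tfrac{m-a}{m}\, x^a y^{m-a-1} \otimes y$ requires division by $m$ and degenerates whenever $p \mid m$, in particular when $m = p$ --- exactly the range used in the paper's application. I would try to bypass this by building the splitting intrinsically from the Higgs structure: for instance, by identifying $\sym^m \mathcal{E}_{\unif}$ with the $\theta$-submodule cyclically generated by the top summand $K_C^{m/2}$ inside $\sym^{m-1}\mathcal{E}_{\unif} \otimes \mathcal{E}_{\unif}$, and identifying $\sym^{m-2}\mathcal{E}_{\unif}$ with a complementary $\theta$-invariant submodule arising from the $\wedge^2$-invariant. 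Gluing these local identifications into a global Higgs-equivariant decomposition is where the bulk of the work lies.
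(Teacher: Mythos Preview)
Your approach is essentially the paper's own: identify the first factor with $\sym^{m-1}(\mathcal{E}_{\unif},\theta_{\unif})$ and then invoke the Clebsch--Gordan decomposition $\sym^{m-1}V\otimes V\cong\sym^m V\oplus\sym^{m-2}V$. The paper simply cites a characteristic-zero reference (Fulton--Harris) for this isomorphism and does not address the positive-characteristic difficulty you raise.

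You are right to flag the obstacle, and your proposed workaround does not succeed. Working at the generic point with $n_1$ a Jordan block of size $m$ and $n_2$ one of size $2$, the tensor Higgs field $N=n_1\otimes1+1\otimes n_2$ satisfies
\[
N^{k}=n_1^{k}\otimes 1+k\,n_1^{k-1}\otimes n_2,
\qquad\text{hence}\qquad
N^{m}=m\,n_1^{m-1}\otimes n_2,
\]
which vanishes whenever $p\mid m$ --- in particular for $m=p$, the case actually needed in the application. Thus the $\theta$-cyclic submodule generated by the top summand $K_C^{m/2}$ has rank $m$, not $m+1$, and cannot be identified with $\sym^{m}\mathcal{E}_{\unif}$. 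This already shows that the exponent of nilpotency of $(\mathcal{E}^{m},\theta^{m})$ is $m-1$, not $m$, when $p\mid m$. Worse, that cyclic submodule globalizes to a saturated Higgs subbundle
\[
K_C^{m/2}\oplus K_C^{m/2-1}\oplus\dots\oplus K_C^{-(m-2)/2}
\]
of slope $g-1>0$, so $(\mathcal{E}^{m},\theta^{m})$ is not even semistable when $p\mid m$. (Concretely for $p=m=2$: inside $\mathcal{E}_{\unif}\otimes\mathcal{E}_{\unif}$ the diagonal $K_C\oplus\mathcal{O}_C^{\mathrm{diag}}$ is $\theta$-invariant because $\theta(e_1\otimes f_0+e_0\otimes f_1)=2\,e_1\otimes f_1=0$.) The gap you identified is therefore genuine and not repairable along the lines you suggest; the splitting simply does not exist in the critical case $m=p$, and both conclusions of the lemma fail there.
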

\begin{proof}
By the same argument as Lemma \ref{lemma: stability of Higgs bundle}, we know that the $m$-th symmetric product of the uniformizing Higgs bundle
$$
\mathrm{Sym}^{m}(\mathcal{E}_{\unif},\theta_{\unif})=(K_C^{\frac{m}{2}}\oplus\dots\oplus K_C^{-\frac{m}{2}},\bigoplus_{i=1}^{m}\theta_i)
$$
is stable. 

Note that $(\mathcal{E}_{\unif},\theta_{\unif})$ is an $\SL_2$-principal Higgs bundle together with the standard representation $V$ of $\SL_2$. By a basic fact on finite dimensional representations of $\SL_2$ (see e.g \cite[Exercise 11.11*]{MR1153249}), we have 
$$
\mathrm{Sym}^{m-1}V\otimes V=\mathrm{Sym}^{m}V\oplus\mathrm{Sym}^{m-2}V,
$$
and thus
$$
\begin{aligned}
(\mathcal{E}^m,\theta^m)&=\mathrm{Sym}^{m-1}(\mathcal{E}_{\unif},\theta_{\unif})\otimes(\mathcal{E}_{\unif},\theta_{\unif})\\ 
&=\mathrm{Sym}^{m}(\mathcal{E}_{\unif},\theta_{\unif})\oplus\mathrm{Sym}^{m-2}(\mathcal{E}_{\unif},\theta_{\unif}),
\end{aligned}
$$
which is a polystable Higgs bundle which is nilpotent of exponent $=m$.
\end{proof}

\bibliographystyle{alpha}
\bibliography{reference}
\end{document}